\let\d\relax
\newcommand{\d}{\,\textup{d}}
\newcommand{\R}{\ensuremath{\mathbb{R}}}%
\let\S\relax
\newcommand{\S}{\ensuremath{\mathbb{S}}}%
\newcommand{\N}{\ensuremath{\mathbb{N}}}%
\let\H\relax
\newcommand{\H}[1][n-1]{\mathcal{H}^{#1}}
\newcommand{\<}{\langle}%
\renewcommand{\>}{\rangle}%
\newcommand{\Ccinfty}[1][(\Omega)]{\ensuremath{C_c^\infty #1}}%
\newcommand{\bd}{\partial}%
\newcommand{\ti}{\tilde}%
\newcommand{\cF}{\mathcal{F}}
\newcommand{\cE}{\mathcal{E}}
\newcommand{\cG}{\mathcal{G}}
\newcommand{\sG}{\mathscr{G}}
\newcommand{\cK}{\mathcal{K}}
\newcommand{\cS}{\mathscr{S}}
\newcommand{\cO}{\mathcal{O}}
\newcommand{\cU}{\mathcal{U}}
\newcommand{\sets}{\;;\;}%
\newcommand{\setsep}{\sets}
\newcommand\sse{\subseteq}
\newcommand\subs\sse
\newcommand{\Per}{\textup{Per}}%
\DeclareMathOperator{\Div}{div}%
\DeclareMathOperator*{\Min}{Min!}
\DeclareMathOperator{\epi}{epi}
\DeclareMathOperator{\Sgn}{Sgn}
\DeclareMathOperator{\sgn}{sgn}
\DeclareMathOperator{\gen}{gen}
\DeclareMathOperator{\Lap}{\Delta}
\DeclareMathOperator*{\esssup}{ess\;sup}
\DeclareMathOperator*{\essinf}{ess\;inf}
\newcommand{\cEP}{\cE_{\Per}}
\newcommand{\cGP}{\cG_{\Per}}
\newtheorem{theo}{Theorem}
\newtheorem{prop}[theo]{Proposition}
\newtheorem{lem}[theo]{Lemma}
\newtheorem{cor}[theo]{Corollary}
\newtheorem{rem}[theo]{Remark}
\numberwithin{equation}{section}
\numberwithin{theo}{section}
\newcounter{nr}  \labelformat{nr}{{\normalfont \alph{nr}}}
\newcommand{\zahl}{{\normalfont (\arabic{nr})}}
\newcommand{\bgl}[1][{\normalfont (\alph{nr})}]
   {\begin{list} {#1} {\usecounter{nr}
          \setlength{\topsep}{0.5ex plus0.2ex minus0.1ex}     
          \setlength{\itemsep}{0.2ex plus0.05ex minus0.03ex}  
          \parsep0pt \itemindent0pt 
          \leftmargin30pt   \labelwidth20pt} }  
\newcommand{\el}{\end{list}}
\title{Perturbation results involving the 1-Laplace operator}
\author{Samuel Littig \and Friedemann Schuricht\footnote{Both authors supported by DFG project “Variational problems
related to the 1-Laplace operator”.}}
\begin{document}
\maketitle


\abstract{We consider perturbed eigenvalue problems of the $1$-Laplace
operator and verify the existence of a sequence of solutions. 
It is shown that the eigenvalues of the perturbed problem converge to the
corresponding eigenvalue of the unperturbed problem as the perturbation
becomes small. The results rely on nonsmooth 
critical point theory based on the weak slope.}

\bigskip

{\small Keywords: \emph{$1$-Laplace operator, eigenvalue problems,  
perturbation, non\-smooth critical point theory, weak slope}}

\medskip

\section{Introduction}

Investigations of perturbations of the eigenvalue problem of the $p$-Laplace
operator
\begin{align}
-\Lap_p u +f(x,u)=\lambda |u|^{p-2}u
\quad\mbox{on}\;\Omega\,,\qquad u=0\quad\mbox{on}\;\partial\Omega
\,,\label{e:EVPpLap}
\end{align}
where
\[
\Lap_pu:=\Div |Du|^{p-2}Du \qquad (1<p<\infty)\,, 
\]
gained a lot of interest in the past. A weak solution $u\in
W^{1,p}_0(\Omega)\setminus \{0\}$ is called eigenfunction, the scalar
$\lambda$ eigenvalue, and the tuple $(\lambda, u)$ eigensolution of equation
\eqref{e:EVPpLap}. The function $f$ is considered as perturbation and
one typically assumes that $f$ is small provided $u$ is small, such that
$(\lambda, 0)$ is a trivial eigensolution of \eqref{e:EVPpLap} for any
$\lambda>0$. For $f=0$  we have the (unperturbed) eigenvalue problem of
the $p$-Laplace operator.
 
It is well known that there exists an unbounded sequence of eigenvalues
\begin{equation}   \label{e:int1}
0<\lambda_{1,p}<\lambda_{2,p}\le \lambda_{3,p}\le \ldots
\end{equation}
of the unperturbed $p$-Laplace operator with 
corresponding eigenfunctions $u_{k,p}$. Clearly, any multiple of 
$u_{k,p}$ is also eigenfunction for $\lambda_{k,p}$. Thus
the $(\lambda_{k,p},0)$ are bifurcation points on the branch of
trivial solutions $(\lambda, 0)_{\lambda\in\R}$ of the unperturbed
problem~\eqref{e:EVPpLap} and a natural question is how far this situation is preserved
under small perturbations.

Under suitable assumptions on $f$, the operator $Q: W^{1,p}_0(\Omega)\to
W^{1,p}_0(\Omega)$ with
\[
Q(u):=(-\Lap_p)^{-1}(\lambda |u|^{p-2}u - f(x,u))
\]
turns out to be compact and small as $u$ is small. Hence we may calculate the
Leray-Schauder mapping degree of $u\mapsto u -Q(u)$. 
If the eigenvalue $\lambda_{k,p}$ is simple (which is always the
case for $\lambda_{1,p}$ when $\Omega$ is connected), there exists
a continuous curve $(\lambda_t,u_t)_{t\in\R}$ of eigensolutions of the
perturbed problem \eqref{e:EVPpLap}
crossing the branch of trivial solutions at $(\lambda_{k,p},0)$
(cf.~del~Pino \& Ma\-n\'a\-se\-vich~\cite{delpino-m:91} 
and the survey notes of Peral \cite{peral:97}).
Consequently, if $\lambda_{k,p}$ is simple, 
$(\lambda_{k,p},0)$ is a bifurcation point
of the perturbed problem as well and the eigenvalue $\lambda_{k,p}$ of the
$p$-Laplace operator turns out to be a bifurcation value of the perturbed
$p$-Laplace eigenvalue problem \eqref{e:EVPpLap}. 

A key point in the investiagtion of \eqref{e:EVPpLap} is the underlying
variational structure. In fact the unperturbed problem \eqref{e:EVPpLap}
(i.e. $f=0$) is the
Euler-Lagrange equation of the variational problem 
\begin{align}
\cE_p(v):=\frac{1}{p}\int_\Omega |Dv|^p\d x \;\to \Min_{v\in
  W^{1,p}_0(\Omega)}\label{e:Ep} 
\end{align}
subject to
\begin{align}
\cG_p(v):=\frac{1}{p}\int_\Omega |v|^p\d x=1\,.\label{e:Gp}
\end{align}
In other words, any critical point $u$ of \eqref{e:Ep}, \eqref{e:Gp} turns out
to be an eigenfunction of the $p$-Laplace operator for the eigenvalue
$\lambda=p\,\cE_p(u)$ (which equals the Lagrange multiplier of the constrained
variational problem). Moreover each eigenfunction of the unperturbed equation 
\eqref{e:EVPpLap} is a multiple of a
critical point of \eqref{e:Ep}, \eqref{e:Gp}. 

Notice that an unbounded sequence of eigenvalues $\lambda_{k,p}$ of the 
$p$-Laplace operator as mentioned in \eqref{e:int1} can be obtained by minimax
methods within Ljus\-ter\-nik-Schni\-rel\-man
 theory where one has
\begin{equation} \label{e:int2}
\lambda_{k,p}=\inf_{S\in \cS^{k,p}}\sup_{v\in S}\:\cE_p(v)\,.
\end{equation}
Here the $\cS^{k,p}$ are suitable classes of subsets of $W^{1,p}_0(\Omega)$ 
expressing some topological property of the level sets of $\cE_p$ by means of
some topological index $k$. 
It is well known that these eigenvalues are continuous in $p$ on $[1,\infty)$
(cf.~Parini~\cite{parini:11}, Littig \& Schuricht \cite{littig-s:13}).
For $p>1$ the eigenvalue problem is studied in wide detail with contributions
of many authors. Let us just mention Garcia Azorero \& Peral Alonso
\cite{azorero-a:87}, who seem to have studied the problem first, and the long
list of references contained in Peral~\cite{peral:97}. 

When studying \eqref{e:EVPpLap}, one usually distinguishes three cases
depending on the growth of $f$. Here a typical assumption on $f$ is 
\[
|f(x,u)|\le C(1 +|u|^{r-1}) \,.
\]
For $1\le r< p^*:=\frac{np}{n-p}$ the problem is called subcritical, for
$r=p^*$ it is called critical, and for $r>p^*$ it is called
supercritical. If $p\ge n$ the problem is always subcritical. Usually the
subcritical case is the most easy one to treat. In the critical case we may
expect similar results as in the subcritical case, but the techniques for the
proofs are more involved. In the supercritical case nonexistence of solutions
may occur (cf.~\cite{peral:97}). 

The intention of the present paper is to study such bifurcation problems for
the degenerate limit case $p=1$. Taking into account two types of
perturbations, we cover problems that are formally given by 

\begin{align}\label{e:int3}
-\Div \frac{Du}{|Du|} + f(x,u) =\lambda \frac{u}{|u|}\,
\quad\mbox{on}\;\Omega\,,\qquad u=0\quad\mbox{on}\;\partial\Omega\,,
\end{align}
and by
\begin{align}\label{e:int4}
-\Div \frac{Du}{|Du|} = \lambda \Big(\frac{u}{|u|}+g(x,u)\Big)
\quad\mbox{on}\;\Omega\,,\qquad u=0\quad\mbox{on}\;\partial\Omega
\,.
\end{align}

Notice that already the (unperturbed) eigenvalue problem of the $1$-Laplace
operator (i.e. $f=0$ or $g=0$) is highly degenerated, since the equations
above are not well defined at points where $u(x)=0$ or $Du(x)=0$. 
Having in mind that typically the first eigenfunction of the $1$-Laplace operator 
is a multiple of a characteristic function vanishing on a set of positive
measure, it becomes clear that the equations need some careful justification.  
Instead of $W^{1,1}_0(\Omega)$ one has to work in $BV(\Omega)$ and the
homogeneous boundary conditions have to be considered in a more general sense
than the usual trace in $BV(\Omega)$. Then it turns out that the unperturbed problem
is related to the variational problem 
\begin{equation}
\cE_{TV}(v):=\int_\Omega \d |Dv| + \int_{\bd\Omega}|v|\d
\H\;\to \Min_{v\in BV(\Omega)}\label{e:etv}
\end{equation}\pagebreak[0]
subject to\pagebreak[0]
\begin{align}
\cG_1(v)=\int_\Omega |v|\d x=1\,\label{e:g1}
\end{align}
(cf.~Kawohl \& Schuricht \cite{kawohl-s:07}).
With methods from convex analysis and nonsmooth critical point theory one can
show that critical points of problem \eqref{e:etv}, \eqref{e:g1} (in the sense
of weak slope) satisfy the   
Euler-Lagrange equation 
\begin{align}\label{e:int5}
-\Div z = \lambda s \qquad\mbox{on }\Omega\,.
\end{align}
Here $z\in L^\infty(\Omega)$ is some vector field giving sense to 
$\frac{Du}{|Du|}$ and $s:\Omega\to[-1,1]$ is some sign function
giving sense to $\frac{u}{|u|}$ (cf.~\cite{kawohl-s:07}).
Existence of a sequence of eigenfunctions
$(u_{k,1})_{k\in\N}$ of the $1$-Laplace operator with an unbounded sequence of
corresponding eigenvalues 
\begin{align}
\lambda_{k,1}=\inf_{S\in\cS^\alpha_k}\sup_{v\in S}\:\cE_{TV}(v)\,\label{e:la1k1Lap}
\end{align}
was verified in Milbers \& Schuricht \cite{milbers-s:10a} by minimax methods. 
While in \cite{milbers-s:10a} the classes $\cS^\alpha_k$ are defined by means
of category as topological index, 
we know from Littig \& Schuricht \cite{littig-s:13} that these eigenvalues
$\lambda_{k,1}$ coincide with that using 
\[
\cS^\alpha_k:=\{ S\subseteq L^1(\Omega) \text{ compact, symmetric}\setsep
\cG_1=1\text{ on }S,\:\gen_{L^1} S \ge k\}\, 
\]
with genus $\gen_{L^1} S$ as topological index in \eqref{e:la1k1Lap}.

Investigating bifurcation for the formal problems \eqref{e:int3} and
\eqref{e:int4} we are confronted with the question how to define solutions.
We have to realize that even in the unperturbed case the well-defined
interpretation \eqref{e:int5} of the formal equation has too many solutions
and cannot identify reasonable solutions of the problem
(cf.~Kawohl \& Schuricht \cite{kawohl-s:07}, Milbers \& Schuricht \cite{milbers-s:12}). Therefore we have to define
solutions of \eqref{e:int3} and \eqref{e:int4} as critical points (in the
sense of weak slope) of a related variational problem. 
In this sense we first verify the existence of a
sequence of eigensolutions $(\lambda_{k,\alpha},u_{k,\alpha})_{k\in\N}$ with
critical values $c_{k,\alpha}$ for a class of problems covering 
\eqref{e:int3} and a sequence of
eigensolutions $(\lambda_{k,\beta},u_{k,\beta})_{k\in\N}$ with critical values
$c_{k,\beta}$ for a class of problems covering \eqref{e:int4} 
for each sufficiently small parameter $\alpha>0$ and $\beta>0$, respectively.
In both cases we assume that the perturbation is of subcritical type,
i.e.~$1<p<\frac{n}{n-1}$\,. 
The parameters $\alpha$ and $\beta$ correspond to the norm of the
eigenfunctions and, thus, they reflect the magnitude
of the perturbation. The perturbation is shown to vanish as
$\alpha$ or $\beta$ tend to zero provided we have the 
stronger condition $1<p< \frac{n+1}{n}$.

Finally we prove 
\begin{align}\label{e:int6}
\lambda_{k,1}=\lim_{\alpha\to 0}\lambda_{k,\alpha}\qquad \text{and}\qquad
\lambda_{k,1}=\lim_{\beta\to 0}\lambda_{k,\beta}
\end{align}
for any $k\in\N$. Since all points $(\lambda,0)_{\lambda\in\R}$ may be
considered as 
trivial solution of the perturbed eigenvalue problem, \eqref{e:int6} shows
that the minimax eigenvalues $\lambda_{k,1}$ of the (unperturbed) $1$-Laplace
operator according to \eqref{e:la1k1Lap} are bifurcation values 
for the perturbed eigenvalue problems \eqref{e:int3} and \eqref{e:int4}.

Let us mention that De\-giovan\-ni~\&~Magro\-ne~\cite{degiovanni-m:09}
treated the critical case (which is not covered by our results). 
They proved existence of
nontrivial solutions of \eqref{e:int3} for any $\lambda>\lambda_{k,1}$ and the
perturbation $f(x,u)=-|u|^{\frac{n}{n-1}-2}u$. Here
a different method is used that relies on truncation techniques of
$BV$-functions and exploits the specific form of the perturbation. 

In Sec\-tion~\ref{s:formulation}
we precisely formulate the two types of perturbed eigenvalue
problems and justify related quantities. The main results are stated in
Sec\-tion~\ref{s:results}. As preparation for the proofs, Sec\-tion~\ref{s:tools}
presents tools from nonsmooth critical point theory and some general norm
estimates. In Sec\-tion~\ref{s:proofs} we give the proofs of the main results. 

\medskip

\textbf{Notation and Conventions:} 
By $L^p(\Omega)$ we denote the usual Lebesgue space of $p$-integrable functions 
with norm $\|\cdot\|_p$ and by
$W^{1,p}_0(\Omega)$ the Sobolev space of $p$-integrable functions having
$p$-integrable weak derivatives and zero trace.
$C_{\textup{BV}}$ is the embedding constant of
$W^{1,1}_0(\Omega)$ (with norm $\|Dv\|_1$) in
$L^{\frac{n}{n-1}}(\Omega)$, i.e. it is the optimal constant in
\begin{equation} \label{e:int8}
\|v\|_{\frac{n}{n-1}} \le  C_{\textup{BV}} \|Dv\|_1   \quad\text{for all } v\in
W^{1,1}_0(\Omega)\,.
\end{equation}
$BV(\Omega)$ stands for the space of functions of bounded variation.
With the usual convention of identifying $v\in L^1(\Omega)$ with its
extension by zero on $\R^n\setminus \Omega$, we have for all 
$v\in BV(\Omega)$  
\begin{align}
\cE_{TV}(v)=\int_{\R^n} \d |Dv| = \int_\Omega \d |Dv| + \int_{\bd\Omega}|v|\d
\H\label{e:defETV} 
\end{align}
(cf.~\cite{evans-g:92}). Due to Theorem~3.1 in 
\cite{littig-s:13} and the Poincaré inequality,
$\cE_{TV}$ is a norm on $BV(\Omega)$ equivalent to the standard norm. 
$\Ccinfty$ is the space of test functions having compact support.

We write $\lambda_{k,p}$ ($p> 1$) for the variational eigenvalues
of the $p$-Laplace operator as given in \eqref{e:int2} and
$\lambda_{k,1}$ always stands for the eigenvalues of the (unperturbed)
$1$-Laplace operator according to \eqref{e:la1k1Lap}. Without danger of
confusion we use $\lambda_{k,\alpha}$ and $\lambda_{k,\beta}$ for the 
eigenvalues of the perturbed $1$-Laplace operator for perturbations
of the type given in Section \ref{ss:pertenergy} and Section
\ref{ss:pertconst}, respectively. Analogously we denote the corresponding
eigenfunctions and critical values.

The set-valued sign function $\Sgn$ on $\R$ is  
\begin{equation}
\Sgn(s) := \left\{
\begin{array}{ccl} \{1\} && \text{if } s>0\,,\\
                  \mbox{$[-1,1]$} && \text{if } s=0\,,\\
                  \{-1\} && \text{if } s<0\,
          \end{array}
           \right.   
\end{equation}
and $\S^{k-1}$ denotes the $(k-1)$-dimensional unit sphere in $\R^k$.

For a Banach space $X$ and its dual $X^*$, the duality pairing is given by 
$\<\cdot,\cdot\>$. By $B_\rho(u)$ we denote the open $\rho$-ball around $u$, 
by $B_\rho(M)$ the open $\rho$-neighborhood of the set $M$, by 
$\overline M$ the closure of $M$, by $I_M$ the indicator function of $M$,
and by $\chi_M$ the characteristic function of $M$.
We write $\gen_X S$ for the genus of a symmetric set $S\subs X\setminus\{0\}$  
(cf.~\cite[Chap.~44.3]{zeidler:84} for basic properties). 
For a scalar function $\cF:X\to\R$ we use $\partial \cF(u)$ to denote 
the convex subdifferential at $u$ for a convex $\cF$ and 
Clarke's generalized gradient at $u$ for a locally Lipschitz
continuous $\cF$. Clarke's generalized directional derivative of $\cF$ 
at $u$ in direction $v$ is given by $\cF^0(u;v)$ 
(cf.~\cite{clarke:87}). 
For a continuous or merely 
lower semicontinuous $\cF:M\to\R$ on a metric space $M$, 
the weak slope of $\cF$ at $u$, denoted by
$|d\cF|(u)$, is a nonnegative real number that describes somehow the slope of
$\cF$ on some neighborhood of $u$ and can be considered as some replacement of
$\|\cF'(u)\|$ in the smooth case 
(cf.~Section~\ref{ss:nonsmooth} for some brief introduction).

\section{Formulation of the problem}
\label{s:formulation}

We always assume that $\Omega\subseteq\R^n$ is open and bounded with Lipschitz
boundary and that $1<p<\infty$. 
First we study perturbed eigenvalue problems that cover problems 
formally given by
\begin{align}
-\Div \frac{Du}{|Du|}+f(x,u)=\lambda \frac{u}{|u|}
\quad\text{ on }\Omega\,, \quad u=0 \quad\text{ on }\partial \Omega\,. 
\label{e:formEVP}
\end{align}
More precisely we consider 
critical points of a constraint variational problem
\begin{align}
\cE_{TV}(v)+\cE_{\Per}(v)\to \Min_{v\in L^p(\Omega)} \label{e:E_per1Lap}
\end{align} subject to \begin{align}
\cG_1(v):=\int_\Omega|v|\d x = \alpha\,.\label{e:cG_Eper}
\end{align}
Here $\cEP:L^p(\Omega)\to\R$ be a suitable locally
Lipschitz continuous functional and
we identify $\cE_{TV}$ with its extension on $L^p(\Omega)$ 
for $1< p< \tfrac{n}{n-1}$ given by
\begin{align}\label{e:form4}
  \cE_{TV}(v):= \left\{  
         \begin{array}{ll} \int_\Omega \d |Dv| + \int_{\bd\Omega}|v|\d\H \quad & 
                              \text{on } BV(\Omega)\,,\\
                              \infty & \text{on } L^p(\Omega)\setminus
                              BV(\Omega)\,.
          \end{array}                     \right. 
\end{align}
Obviously $\cE_{TV}$ is convex and it is the 
the lower semicontinuous extension 
of $\int_\Omega |Dv|\d x$ from $v\in W^{1,1}_0(\Omega)$ on 
$L^p(\Omega)$ (cf.~\cite{kawohl-s:07} and \cite{littig-s:13}).
We call $u\in L^p(\Omega)$ critical point 
of \eqref{e:E_per1Lap}, \eqref{e:cG_Eper} if $u$ is a critical point 
with respect to the weak slope of 
\[  \cE = \cE_{TV}+\cE_{\Per}    \]
in the metric space
\[ K_\alpha:=\{v\in L^p(\Omega)\setsep \cG_1(v)=\alpha\}\,, \]
i.e.~if $|d\cE|(u)=0$ (cf.~Section~\ref{ss:nonsmooth} and 
Degiovanni \& Marzocchi~\cite{degiovanni-m:94}).
This is equi\-va\-lent to $u$ being a critical point of 
\begin{align}
\tilde\cE = \cE_{TV}+\cE_{\Per} + I_{\{\cG_1=\alpha\}}\,\label{e:EperCritFunc}
\end{align}
on the metric space $L^p(\Omega)$, i.e. $|d\tilde\cE|=0$, where 
\[
I_{\{\cG_1=\alpha\}}(v)=\begin{cases}0&\text{when } \cG_1(v)=\alpha\\ \infty
  &\text{otherwise}\end{cases} 
\]
is the indicator function of $K_\alpha$ (cf.~also Milbers \& Schuricht 
\cite{milbers-s:10}).

With this definition at hand we can apply a nonsmooth version of 
Ljus\-ter\-nik-Schni\-rel\-man theory to get a sequence $(u_{k,\alpha})_{k\in\N}$ of
eigenfunctions of the perturbed problem \eqref{e:E_per1Lap}, \eqref{e:cG_Eper}
for each parameter $\alpha>0$. As necessary condition each eigensolution
$(\lambda,u)$ satisfies an Euler-Lagrange equation of the type 
\[
-\Div z + u^* = \lambda s\,
\]
where $z$ and $s$ are related to $u$ as in the unperturbed case 
(cf.~\eqref{e:int5}) and $u^*\in\partial\cE_{Per}(u)$.
Also in the perturbed case the parameter $\lambda\in\R$ will be called
eigenvalue of the eigenfunction $u$. Essential ingredients in our analysis 
will be some Pa\-lais-Smale condition (short (PS)-con\-di\-tion), which requires
special care, and the so-called epigraph condition (short (epi)-con\-di\-tion)
that rules out ``artificial'' critical points on the epigraph of our merely
lower semicontinuous functional and it can be treated rather 
straightforward.

As a second type of perturbation we cover problems formally given by
\begin{align}
\label{e:form5}
-\Div \frac{Du}{|Du|}=\lambda \left(\frac{u}{|u|}+g(x,u)\right)
\quad\text{ on }\Omega\,, \quad u=0 \quad\text{ on }\partial \Omega\,. 
\end{align}
More precisely we study critical points of constrained variational
problems 
\begin{align}
\cE_{TV}(v)\to \Min_{v\in L^p(\Omega)} \label{e:form6}
\end{align} subject to \begin{align}
\cG_1(v)+\cG_{Per}(v) = \beta\,\label{e:form7}
\end{align}
where $\cGP:L^p(\Omega)\to\R$ is a suitable locally Lipschitz continuous
functional. Here $u\in L^p(\Omega)$ is a critical point of \eqref{e:form6},
\eqref{e:form7} if $u$ is a critical point of $\cE_{TV}$ in the metric space
\[ K_\beta:=\{v\in L^p(\Omega)\setsep \cG_1(v)+\cGP(v)=\beta\}\,, \]
i.e. $|d\cE_{TV}|(u)=0$ or, equivalently, if $u$ is a critical point of
\[
\cE=\cE_{TV} + I_{\{\cG_1 + \cG_{Per}=\beta\}}
\]
on $L^p(\Omega)$, i.e. $|d\cE|(u)=0$.
This way we again obtain  
a sequence $(u_{k,\beta})_{k\in\N}$ of eigenfunctions of the perturbed problem 
\eqref{e:form6}, \eqref{e:form7}
for each parameter $\beta>0$ and the eigensolutions  
satisfy an Euler-Lagrange equation of the form 
\[
-\Div z = \lambda (s +u^*)
\]
where $z$ and $s$ are related to $u$ as before and $u^*\in \partial
\cG_{Per}(u)$. Again $u$
will be called eigenfunction and $\lambda$ the corresponding eigenvalue for
this type of perturbation. In contrast to the perturbation of the first type,
the (PS)-con\-di\-tion is a simple immediate consequence of the compact embedding
from $BV(\Omega)$ in $L^p(\Omega)$, but the verification of the
(epi)-con\-di\-tion turns out to be more delicate.

\begin{rem}
Note that the weak slope and, thus, our definition of criticality
  depends on the specific choice 
  of the metric. This issue was already addressed in Milbers \& Schuricht \cite{milbers-s:10} and Littig \& Schuricht \cite{littig-s:13}. It turns out that, for any $p\in [1,\frac{n}{n-1})$,
  the minimax construction as in \eqref{e:la1k1Lap} 
  provides eigenfunctions $u_{k,1}$ of the (unperturbed) $1$-Laplace operator
  that are critical points in $L^p(\Omega)$ with respect to the $L^p$-metric.   
  However it is not clear whether $L^p$-critical points are 
  also $L^q$-critical points for $p\neq q$ in general. 
  Alternatively one could consider critical points in $BV(\Omega)$ with
  respect to the stronger $BV$-norm. In the one-dimensional case, for $\Omega=(0,1)\subseteq\R$, it can be
  shown that this leads to a much larger set of critical points 
  and it seems that, in general, the $BV$-norm is too strong 
  to get a reasonable set of critical points (cf.~Milbers \& Schuricht \cite{milbers-s:12}). 
\end{rem}

As prototype for perturbations $\cEP$, $\cGP$ we have in mind
functionals of the form 
\begin{align}\label{e:GPerDefn}
\cE_{\Per}(v)=\int_{\Omega}\int_0^{v(x)}f(x,s)\d s \d x\,,
\qquad
\cG_{\Per}(v)=\int_{\Omega}\int_0^{v(x)}g(x,s)\d s \d x\,.
\end{align}
In order to derive general properties for this kind of functionals we
use the notation
\begin{align}
\cF(v):=\int_{\Omega}\int_0^{v(x)}f(x,s)\d s \d x\,.\label{e:form8}
\end{align} 
For the integrand $f:\Omega\times\R\to\R$ we assume that 
\bgl
\item[(f1)] \label{e:form1}
$f$ is locally integrable on $\Omega\times\R$ and 
$F:\Omega\times \R\to \R$ defined by
\[ F(x,t):=\int_{0}^tf(x,s)\d s \]
is a Carathéodory function,

\item[(f2)] \label{e:form2}
there is $C_{\Per}>0$ such that
\begin{align}\label{e:fleCsp-1}
|f(x,s)|\le p\, C_{\Per}\,|s|^{p-1} \quad
\text{for all } s\in\R \text{\; and a.e. }x\in\Omega \,,
\end{align}

\item[(f3)] \label{e:form3}
$f(x,\cdot)$ is odd for a.e. $x$, i.e. 
\begin{align} \label{e:antsymf}
f(x,s)=-f(x,-s) \quad \text{for all } s\in\R \text{\; and a.e. }x\in\Omega \,,
\end{align}
\el
A standard example for $f$ would be
\[
f(x,s)= |s|^{p-2}s  \,.   
\]
In the following theorem, which is proved in Section \ref{ss:proofcF}, we 
summarize several properties of functional $\cF$ given in \eqref{e:form8}.
\begin{theo}\label{t:cF}
Let $\Omega\subseteq\R^n$ be open and bounded with Lipschitz boundary
and let $f$ satisfy conditions \upshape{(f1)-(f3)} with
$p\in (1,\infty)$. Then 
$\cF: L^p(\Omega)\to \R$ according to \eqref{e:form8} is well defined with
\bgl[\zahl]
\item
$\cF$ is even, i.e. $\cF(u)=\cF(-u)$ for all $u\in L^p(\Omega)$,
\item
$\cF(u) = \cF(|u|)$ for all $u\in L^p(\Omega)$,
\item
$\cF$ is Lipschitz continuous on bounded subsets of $L^p(\Omega)$ and, thus,
locally Lipschitz continuous on $L^p(\Omega)$,
\item for $u^*\in\partial\cF(u)\subseteq L^{p'}(\Omega)$ one has  
\begin{equation}
u^*(x)\in [\essinf_{s\to u(x)}f(x,s), \esssup_{s\to u(x)}f(x,s)]
\qquad\text{for a.e. } x\in \Omega \,,\label{e:u*f-f+}
\end{equation}
\begin{equation}
 \|u^*\|_{p'}\le p\,C_{\Per}\,\|u\|_p^{p-1}\,,\label{e:|u*|p'}
\end{equation}

\item and for all $u\in L^p(\Omega)$ and all $\alpha\ge0$
\begin{align}
 |\cF(\alpha u)|\le \alpha^p\, C_{\Per}\|u\|_p^p\,.\label{e:cFleC|u|_p^p}
\end{align}
\el
\end{theo}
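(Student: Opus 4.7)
My plan is to reduce everything to two pointwise estimates for the primitive $F(x,t)=\int_0^t f(x,s)\d s$ and then lift them to $\cF$ by integration over $\Omega$. Integrating (f2) yields
\[
|F(x,t)|\le C_{\Per}|t|^p, \qquad |F(x,t)-F(x,s)|\le pC_{\Per}\max(|t|,|s|)^{p-1}|t-s|.
\]
The first estimate, together with the Carathéodory property of $F$ from (f1), shows that $\cF$ is well defined on $L^p(\Omega)$ with $|\cF(u)|\le C_{\Per}\|u\|_p^p$; substituting $\alpha u$ immediately yields (e).

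Properties (a) and (b) follow almost by inspection: the oddness (f3) makes $F(x,\cdot)$ even, so $F(x,u(x))=F(x,|u(x)|)=F(x,-u(x))$ a.e.\ in $\Omega$, and integration gives both statements. For (c) I combine the pointwise Lipschitz estimate above with Hölder's inequality applied to the conjugate exponents $p$ and $p'=p/(p-1)$, using $(p-1)p'=p$, to obtain
\[
|\cF(u)-\cF(v)|\le pC_{\Per}\int_\Omega \max(|u|,|v|)^{p-1}|u-v|\d x \le pC_{\Per}(\|u\|_p+\|v\|_p)^{p-1}\|u-v\|_p,
\]
which establishes Lipschitz continuity on bounded sets and hence local Lipschitz continuity.

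The substantive step is (d). Since $\cF$ is locally Lipschitz by (c), Clarke's generalized gradient is a well-defined subset of $L^{p'}(\Omega)$, and I plan to obtain \eqref{e:u*f-f+} via a pointwise Aubin--Clarke calculation: for a.e.\ $x$ the primitive $F(x,\cdot)$ is locally Lipschitz in $t$ and its pointwise Clarke subdifferential at $t_0$ is contained in $[\essinf_{s\to t_0}f(x,s),\esssup_{s\to t_0}f(x,s)]$; a measurable-selection argument then transports this pointwise inclusion to the integral functional, yielding \eqref{e:u*f-f+}. The norm bound \eqref{e:|u*|p'} drops out of \eqref{e:u*f-f+} combined with (f2): $|u^*(x)|\le pC_{\Per}|u(x)|^{p-1}$ a.e., so
\[
\|u^*\|_{p'}^{p'}\le (pC_{\Per})^{p'}\int_\Omega |u(x)|^{(p-1)p'}\d x = (pC_{\Per})^{p'}\|u\|_p^p,
\]
whence $\|u^*\|_{p'}\le pC_{\Per}\|u\|_p^{p-1}$. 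The main obstacle is the pointwise Clarke-subgradient identification: because $f(x,\cdot)$ is only assumed measurable (not continuous), the classical Aubin--Clarke formula does not apply verbatim, and the essential-cluster interval on the right of \eqref{e:u*f-f+} has to be justified by a direct computation of Clarke's upper derivative of $F(x,\cdot)$ before the selection argument can be invoked.
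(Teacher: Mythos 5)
Your treatment of well-definedness and of items (a), (b), (c), (e) is correct and coincides with the paper's: integrate (f2) to get $|F(x,t)|\le C_{\Per}|t|^p$ and the pointwise Lipschitz bound, then apply H\"older with exponents $p,p'$. The same is true for deducing the norm bound \eqref{e:|u*|p'} from \eqref{e:u*f-f+} together with \eqref{e:fleCsp-1}. The identification of the pointwise Clarke subdifferential of the primitive, $\partial F(x,\cdot)(t)=[\essinf_{s\to t}f(x,s),\esssup_{s\to t}f(x,s)]$ for $f(x,\cdot)$ merely locally bounded and measurable, is also the paper's starting point for (d) (it is exactly Example~2.2.5 in Clarke, so the ``obstacle'' you flag at the end is in fact already settled in the literature and is not where the difficulty lies).

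The genuine gap is the transport step in (d), which you dispatch with the phrase ``a measurable-selection argument then transports this pointwise inclusion to the integral functional.'' No measurable selection is involved, and the step does not follow formally from the pointwise inclusion: one must prove the inequality $\cF^0(u;v)\le\int_\Omega F^0(x,\cdot)(u(x);v(x))\,\d x$ between generalized directional derivatives, and the obstruction is interchanging the $\limsup_{w\to u,\,t\downarrow 0}$ defining $\cF^0(u;v)$ with the integral over $\Omega$. The paper does this by taking an optimizing sequence $(w_k,t_k)$, applying Lebourg's mean value theorem to bound each difference quotient by $p\,C_{\Per}(|w_k|+|v|)^{p-1}|v|$, showing these majorants converge in $L^1(\Omega)$ (via the continuity of the duality map $J_p:L^p\to L^{p'}$), extracting a subsequence with a common integrable majorant, and then invoking Fatou's lemma; only after that can one test with $v=t\chi_E$ to localize and conclude $u^*(x)\in\partial F(x,\cdot)(u(x))$ a.e. This domination argument is the substantive content of assertion (4) and is absent from your proposal; without it (or an explicit appeal to a theorem of Aubin--Clarke type whose hypotheses you verify, which your own remark concedes do not apply verbatim here), the pointwise inclusion \eqref{e:u*f-f+} is not established.
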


\begin{rem}
\begin{enumerate}
\renewcommand\labelenumi{\textup{(\theenumi)}}
\item
If $f(x,\cdot)$ is continuous for a.e. $x\in\Omega$ , by \eqref{e:u*f-f+}
Clarke's generalized gradient  
$\partial\cF(u)$ reduces to a singleton $\partial\cF(u)=\{u^*\}$ with 
\[
u^*(x)=f(x,u(x)) \qquad \text{for a.e. }x\in\Omega\,.
\]
In this sense the generalized gradient $\partial\cF(u)$ extends 
the classical Nemytskii operator
$u\mapsto f(\cdot, u(\cdot))$ as formally used in \eqref{e:formEVP} and
\eqref{e:form5}. 

\item
Let us mention that the theorem remains true for $\Omega\subseteq\R^n$
merely open.

\item
With perturbations $\cEP$ and $\cGP$ of the form \eqref{e:GPerDefn},
our main results stated in Section \ref{s:results} do not need further
conditions than \textup{(f1)-(f3)} for the integrand $f$ of $\cEP$, but our verification of the (epi)-con\-di\-tion requires a slightly stronger
assumption on the integrand $g$ of $\cGP$ (cf.~\eqref{e:u^*u>-|u|} below).

\end{enumerate}
\end{rem}

\section{Main results}
\label{s:results}

In this section we state the main results while the essential proofs
are postponed to Section~\ref{s:proofs} and some preliminary results are
presented in Section~\ref{s:tools}. We always assume that 
$\Omega\subseteq\R^n$ is open and bounded with Lipschitz boundary.

\subsection{Perturbation of the energy}
\label{ss:pertenergy}

For $\alpha>0$ and $1<p<\frac{n}{n-1}$ we investigate the
perturbed eigenvalue problem of the $1$-Laplace operator 
\begin{equation}
\cE_{TV}(v)+\cE_{\Per}(v)\to \Min_{v\in L^p(\Omega)} \label{e:E_per1Lap-a}
\end{equation}
\begin{equation}
\cG_1(v) = \alpha\,.\label{e:cG_Eper-a}
\end{equation}
(cf.~\eqref{e:E_per1Lap}, \eqref{e:cG_Eper}).\pagebreak[4]

Recall that we defined
eigenfunctions to be critical points of 
\[ \cE_{TV}+\cE_{\Per} + I_{\{\cG_1=\alpha\}} \] 
on $L^p(\Omega)$. 
For the perturbation function $\cE_{\Per}: L^p(\Omega)\to \R$ we assume that
\bgl
\item[(E1)] $\cE_{\Per}$ is locally Lipschitz continuous on $L^p(\Omega)$,
\item[(E2)] $\cE_{\Per}$ is even, i.e. $\cE_{\Per}(v)=\cE_{\Per}(-v)$
for all $v\in L^p(\Omega)$,
\item[(E3)] there is a constant $C_{\Per}>0$ such that for all $v\in
  L^p(\Omega)$ 
\begin{align}
|\cE_{\Per}(v)|\le C_{\Per}\|v\|_p^p  \,,\label{e:Eper_le_||p}
\end{align}
\item[(E4)] for all $v^*\in\partial \cE_{\Per}(v)$ and all
$v\in L^p(\Omega)$ one has
\begin{align}
\|v^*\|_{p'}\le p\,C_{\Per}\|v\|_p^{p-1}\,.\label{e:u*leu_p^p}
\end{align}
\el
Notice that all these conditions are fulfilled in the case where 
\begin{align} \nonumber
\cE_{\Per}(v)=\int_{\Omega}\int_0^{v(x)}f(x,s)\d s \d x\label{e:GPerDefn}
\end{align}
and the integrand $f$ satisfies (f1)-(f3) (cf.~Theorem~\ref{t:cF} above).

Since $\cE_{TV}$ is lower semicontinuous on $L^p(\Omega)$, the functional
$\cE_{TV}+\cE_{\Per}$ turns out to be lower semicontinuous on
$L^p(\Omega)$ too. By definition, $u\in L^p(\Omega)$ is an eigenfunction of our
perturbed $1$-Laplace problem if it is a critical point of
\eqref{e:E_per1Lap-a},\,\eqref{e:cG_Eper-a} in the sense of the weak slope,
i.\,e. $|d\cE|(u)=0$ for 
\[\cE:=\cE_{TV}+\cE_{\Per} + I_{\{\cG_1=\alpha\}}\]
with $\alpha=\cG_1(u)$. Let us first formulate some Euler-Lagrange equation as
necessary condition for critical points of that problem. The proof can be
found in Section \ref{ss:proofpertenergy} below.

\begin{theo}[Eu\-ler-La\-grange Equation]\label{t:ELG}
Let $\Omega\subseteq\R^n$ be open and bounded with Lipschitz boundary, let
$1<p<\frac{n}{n-1}$, let $\cEP$ satisfy \textup{(E1)-(E4)}, and let  
$u$ be a critical point of variational problem
\eqref{e:E_per1Lap-a},\,\eqref{e:cG_Eper-a} for some $\alpha>0$. 
Then there exists a function $s\in L^{\infty}(\Omega)$ with 
\[
s(x)\in \Sgn(u(x)) \qquad\text{for a.e. }x\in \Omega\,,
\]
a vector field $z\in L^{\infty}(\Omega,\R^n)$ with
\[
\Div z\in L^{p'}(\Omega)\,,\quad\|z\|_\infty= 1\,,
\quad \text{ and }\quad\cE_{TV}(u) =- \int_\Omega u  \Div z\d x\,,
\]
some $u^*\in \partial \cE_{\Per}(u)\subseteq L^{p'}(\Omega)$
and a Lagrange multiplier $\lambda\in\R$ such
that the Eu\-ler-La\-grange equation 
\begin{align}
 -\Div z + u^* = \lambda s \qquad\text{on }\:\Omega    \label{e:ELGEper}
\end{align}
is satisfied.

In the case where 
\begin{align} \label{e:results1}
\cE_{\Per}(v)=\int_{\Omega}\int_0^{v(x)}f(x,s)\d s \d x 
\end{align}
with $f$ satisfying \textup{(f1)-(f3)} we have  
\[ 
u^*(x)\in  \Big[\essinf_{s\to u(x)}f(x,s), \esssup_{s\to u(x)}f(x,s)\Big]\quad
\text{ for a.e. $x\in\Omega$\,.} 
\]
If, in addition, $f(x,\cdot)$ is continuous for a.e. $x\in\Omega$, then 
$u^*(x)=f(x,u(x))$ a.e. on $\Omega$ and \eqref{e:ELGEper} becomes
\begin{align}\label{e:results1a}
 -\Div z + f(x,u) = \lambda s \qquad\text{on }\:\Omega\,.
\end{align}
\end{theo}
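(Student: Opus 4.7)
The plan is to derive the stated equation by combining a nonsmooth Lagrange multiplier rule for critical points in the sense of the weak slope with the known characterizations of the three relevant subdifferentials. Throughout, let $\tilde\cE := \cE_{TV} + \cE_{\Per} + I_{\{\cG_1 = \alpha\}}$, so the assumption reads $|d\tilde\cE|(u) = 0$, and $\alpha = \cG_1(u) > 0$ in particular forces $u \not\equiv 0$.

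First, I would invoke the nonsmooth Lagrange principle that will be prepared in Section~\ref{s:tools} (in the spirit of Degiovanni \& Marzocchi and adapted in Milbers \& Schuricht \cite{milbers-s:10}): under a constraint qualification the weak-slope condition $|d\tilde\cE|(u) = 0$ implies the existence of $\lambda \in \R$, $w \in \partial \cE_{TV}(u)$, $u^* \in \partial \cE_{\Per}(u)$, and $s \in \partial \cG_1(u)$ with
\begin{equation*}
w + u^* = \lambda s \qquad \text{in } L^{p'}(\Omega).
\end{equation*}
The constraint qualification reduces to $0 \notin \partial \cG_1(u)$, which is in force because $\cG_1(u) = \alpha > 0$ forces $u$ to be non-zero on a set of positive measure, whence every $s \in \partial \cG_1(u)$ satisfies $s(x) = \pm 1$ there.

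Second, I would identify the three subdifferentials. The convex subdifferential of $\cG_1$, viewed on $L^p(\Omega)$, consists precisely of $s \in L^\infty(\Omega)$ with $s(x) \in \Sgn(u(x))$ a.e., yielding the stated form of the right-hand side. For $\cE_{\Per}$, hypotheses (E1) and (E4) already guarantee $u^* \in L^{p'}(\Omega)$ with the bound \eqref{e:u*leu_p^p}. For $\cE_{TV}$ I would appeal to the Anzellotti-type characterization used in Kawohl \& Schuricht \cite{kawohl-s:07}: $w \in \partial \cE_{TV}(u)$ if and only if there is $z \in L^\infty(\Omega;\R^n)$ with $\|z\|_\infty \le 1$, $-\Div z = w$ in $\D$, and $\cE_{TV}(u) = -\int_\Omega u\, \Div z\, dx$, the boundary contribution in \eqref{e:defETV} being encoded via the zero extension of $u$ to $\R^n$. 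Since $w = \lambda s - u^* \in L^{p'}(\Omega)$, automatically $\Div z \in L^{p'}(\Omega)$. Because $u \not\equiv 0$ gives $\cE_{TV}(u) > 0$, the calibration condition $(z,Du) = |Du|$ forces $z$ to realize the polar decomposition of $Du$ $|Du|$-a.e., which sharpens $\|z\|_\infty \le 1$ to $\|z\|_\infty = 1$.

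Finally, when $\cE_{\Per}$ has the Nemytskii form \eqref{e:results1}, Theorem~\ref{t:cF} supplies the a.e.\ enclosure of $u^*$ by $\essinf_{s\to u(x)}f(x,s)$ and $\esssup_{s\to u(x)}f(x,s)$; if $f(x,\cdot)$ is continuous the two bounds collapse to $f(x,u(x))$, turning \eqref{e:ELGEper} into \eqref{e:results1a}. The main obstacle is the nonsmooth Lagrange multiplier rule itself: because $\cE_{TV}$ is only lower semicontinuous, one cannot directly use Clarke calculus, and one has to relate $|d\tilde\cE|(u)=0$ to a sum-type subdifferential inclusion through a careful combination of the convex and Clarke subdifferentials together with the weak-slope epigraph tools; once that rule is secured, the remaining steps are routine extraction from the subdifferential characterizations.
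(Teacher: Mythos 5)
Your proposal is correct and follows essentially the same route as the paper: the paper's proof simply invokes the nonsmooth Lagrange multiplier rule of Degiovanni \& Schuricht (Cor.~3.7 and Thm.~3.4 of that reference) with $f_0=\cE_{TV}$, $f_1=\cE_{\Per}$, $g_1=\cG_1-\alpha$, checks the constraint qualification via $\cG_1^0(u;-u)=-\alpha<0$ (equivalent to your $0\notin\partial\cG_1(u)$, realized with the test points $u_1=0$, $u_2=2u$), and reads off the subdifferential characterizations from Kawohl \& Schuricht and Theorem~\ref{t:cF}, exactly as you outline. The only difference is presentational: you spell out the identification of $\partial\cE_{TV}(u)$ and the sharpening to $\|z\|_\infty=1$, which the paper delegates entirely to the cited Prop.~4.23 of \cite{kawohl-s:07}.
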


\begin{rem}\label{r:results1a}
\begin{enumerate}
\renewcommand\labelenumi{\textup{(\theenumi)}}
\item
In contrast to the differentiable case of the $p$-Laplace operator with $p>1$,
we cannot expect that the contrary of Theorem \ref{t:ELG} is true,
since a function $u$ satisfying the Eu\-ler-La\-grange
equation \eqref{e:ELGEper} doesn't need to be a a critical point of
\eqref{e:E_per1Lap-a},\,\eqref{e:cG_Eper-a}. This fact is already known for
the unperturbed case $f=0$ (cf.~\cite{milbers-s:12}).

\item 
Using the eigenfunction $u$ as a test function in \eqref{e:ELGEper},
we obtain for the corresponding eigenvalue
\begin{align}   \label{e:results5}
\lambda=\frac{\cE_{TV}(u) + \<u^*,u\>}{\alpha}\,
\end{align}
for some $u^*\in\partial\cEP(u)$.
In the unperturbed case where $f=0$ and thus $u^*=0$, we have 
$\lambda= \frac{\cE_{TV}(u)}{\alpha}$. Hence the eigenvalue $\lambda$ is
uniquely determined by the eigenfunction $u$, though the functions $(z,s)$ in 
\eqref{e:ELGEper} related to $u$ might be not unique (for the first
eigenfunction of the $1$-Laplace operator we even know that $(z,s)$ are not
unique in general, cf.~Kawohl \& Schuricht \cite{kawohl-s:07}).

In the general perturbed situation it is not clear 
if the eigenvalue $\lambda$ associated to an eigenfunction $u$ is uniquely
determined. It might happen that there are solutions   
$(s_1,z_1,u^*_1,\lambda_1)$ and $(s_2,z_2,u^*_2,\lambda_2)$ of
\eqref{e:ELGEper}, both related to eigenfunction $u$ but with different 
eigenvalues $\lambda_1\neq \lambda_2$. However, 
if $\cEP$ has the form \eqref{e:results1}, this
can only occur in the irregular case when for all $x$ from a set $E$ of positive measure the function $f(x,\cdot)$ is not continuous.
\end{enumerate}
\end{rem}

We know that the (unperturbed) eigenvalue problem for the $1$-Laplace operator
has a sequence of eigensolutions $(\lambda_{k,1},u_{k,1})$ that can be
constructed by methods of Ljus\-ter\-nik-Schni\-rel\-man
 theory
(cf.~\cite{milbers-s:10}). Since the underlying
minimax principle has some robustness against perturbations, we now want to
show that the perturbed eigenvalue problem
\eqref{e:E_per1Lap-a},\eqref{e:cG_Eper-a}  
has a sequence of eigensolutions $(\lambda_{k,\alpha},u_{k,\alpha})$ for each
$\alpha>0$ sufficiently small.

In critical point theory the Palais-Smale or (PS)-con\-di\-tion ensures some
compactness. In our nonsmooth context the lower semicontinuous function  
$\cE:M\to\R\cup\{\infty\}$ on the metric space $M$ is said to satisfy the 
(PS)-con\-di\-tion at level $c\in\R$ if any Palais-Smale sequence $(u_j)_j$, 
i.e. $\cE(u_j)\to c$ and $|d \cE|(u_j)\to 0$, admits a convergent subsequence.
If $\cE$ satisfies the (PS)-con\-di\-tion at all levels $c\in\R$, we simply say
that $\cE$ satisfies the (PS)-con\-di\-tion. 

\begin{prop}[$(\textup{PS})$-condition]\label{p:PS-Eper}
Let $\Omega\subseteq\R^n$ be open and bounded with Lipschitz boundary, let
$1<p<\frac{n}{n-1}$, let $\cEP$ satisfy \textup{(E1)-(E4)}, and let $\alpha>0$.
Moreover we assume that one of the following conditions holds:
\bgl
\item[\textup{(E5')}] $\cE_{\Per}$ is globally bounded from below on $L^p(\Omega)$
  or 

\item[\textup{(E5'')}]\label{p:PS_b} 
with the embedding constant $C_{\textup{BV}}$ of $W^{1,1}_0(\Omega)$ in
$L^{\frac{n}{n-1}}(\Omega)$ (cf.~\eqref{e:int8}) we have 
\[ p\le 1+\frac{1}{n} \]
and 
\begin{align}
  \alpha^{n-(n-1)p}< C_{\Per}^{-1}\, C_{\textup{BV}}^{-(p-1)n}\,.\label{e:alph<<1}
\end{align}
\el
Then the \textup{(PS)}-condition is satisfied for
$\cE=\cE_{TV}+\cE_{\Per}+I_{\{\cG_1=\alpha\}}$ on $L^p(\Omega)$. 
 \end{prop}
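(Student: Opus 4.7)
\medskip

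\textbf{Proof plan.} Let $(u_j)_j\subset L^p(\Omega)$ be a Palais-Smale sequence for $\cE$ at some level $c\in\R$. Since $\cE(u_j)$ is finite we automatically have $\cG_1(u_j)=\alpha$ and $u_j\in BV(\Omega)$ for all $j$; the aim is to extract an $L^p$-convergent subsequence (the hypothesis $|d\cE|(u_j)\to 0$ is actually not needed). The whole argument rests on showing that $\cE_{TV}(u_j)$ stays bounded: once this is done, by the equivalence of $\cE_{TV}$ with the standard $BV$-norm and the compact embedding $BV(\Omega)\hookrightarrow L^p(\Omega)$, valid for $1\le p<\frac{n}{n-1}$, a subsequence converges strongly in $L^p(\Omega)$.

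Under hypothesis (E5') the bound on $\cE_{TV}(u_j)$ is immediate: if $\cE_{\Per}\ge -M$ on $L^p(\Omega)$, then $\cE_{TV}(u_j)=\cE(u_j)-\cE_{\Per}(u_j)\le \cE(u_j)+M$, which is bounded because $\cE(u_j)\to c$. The real work is under hypothesis (E5''), and that is the main obstacle. The idea is to interpolate $L^p$ between $L^1$ and $L^{n/(n-1)}$: writing $\frac{1}{p}=(1-\theta)+\theta\frac{n-1}{n}$ yields $\theta=\frac{n(p-1)}{p}$, so
\[
\|u_j\|_p\le \|u_j\|_1^{1-\theta}\|u_j\|_{n/(n-1)}^{\theta}
\le \alpha^{1-\theta}\bigl(C_{\textup{BV}}\,\cE_{TV}(u_j)\bigr)^{\theta},
\]
where we used the constraint $\|u_j\|_1=\alpha$ and the Sobolev embedding \eqref{e:int8} extended to $BV(\Omega)$ (after extension by zero, as in \eqref{e:defETV}). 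Raising to the $p$-th power and invoking (E3) gives
\[
|\cE_{\Per}(u_j)|\le C_{\Per}\|u_j\|_p^p
\le C_{\Per}\,C_{\textup{BV}}^{\,n(p-1)}\,\alpha^{n-(n-1)p}\,\cE_{TV}(u_j)^{n(p-1)},
\]
since $p\theta=n(p-1)$ and $p(1-\theta)=n-(n-1)p$. The crucial point is that (E5'') forces (i) $n(p-1)\le 1$ and (ii) $A:=C_{\Per}\,C_{\textup{BV}}^{\,n(p-1)}\alpha^{n-(n-1)p}<1$. From $\cE_{TV}(u_j)\le \cE(u_j)+|\cE_{\Per}(u_j)|\le \cE(u_j)+A\,\cE_{TV}(u_j)^{n(p-1)}$ we then conclude boundedness: if $n(p-1)=1$ we absorb directly into the left-hand side using $A<1$, and if $n(p-1)<1$ we apply Young's inequality to dominate $A\,t^{n(p-1)}$ by $\tfrac12 t + C$.

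With $\cE_{TV}(u_j)$ bounded in either case, $(u_j)$ is bounded in $BV(\Omega)$, and the compact embedding $BV(\Omega)\hookrightarrow L^p(\Omega)$ yields a subsequence converging in $L^p(\Omega)$, completing the verification of the (PS)-condition. The only delicate spot is the interpolation/absorption step under (E5''), where one must verify that the algebraic exponents match the condition in the hypothesis and that the borderline case $n(p-1)=1$ still permits absorption thanks to the strict inequality \eqref{e:alph<<1}.
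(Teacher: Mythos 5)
Your proof is correct and follows essentially the same route as the paper: interpolate $L^p$ between $L^1$ and $L^{n/(n-1)}$ using the constraint $\|u_j\|_1=\alpha$ (this is exactly Proposition~\ref{p:norm1} and Corollary~\ref{cor:||pp}), absorb the resulting term $A\,\cE_{TV}(u_j)^{n(p-1)}$ with $A<1$ into $\cE_{TV}(u_j)$, and conclude via the compact embedding $BV(\Omega)\hookrightarrow L^p(\Omega)$. The only cosmetic difference is that the paper linearizes the sublinear power once and for all via $t^{(p-1)n}\le 1+t$, whereas you split off the borderline case $n(p-1)=1$ and use Young's inequality otherwise; both work, and your variant incidentally shows that \eqref{e:alph<<1} is only genuinely needed at the borderline exponent.
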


\begin{rem}
Condition \textup{(E5')} is trivially satisfied provided $\cE_{\Per}$ is of the
form \eqref{e:results1} and the integrand $f$ is bounded from below on 
$\Omega\times [0,\infty)$.
Assumption $p\le 1 + \frac{1}{n}$, or equivalently
$n-(n-1)p\ge\frac{1}{n}$, implies that \eqref{e:alph<<1} can always
be achieved for $\alpha$ sufficiently close to zero. 
\end{rem}

We use the genus as topological index for the minimax construction of critical
points. As genus $\gen_X S$ of a symmetric $S\subseteq X\setminus \{0\}$
in a Banach space $X$ we define the
least integer $k\in\N$ such that there exists an odd continuous map 
$\phi: S\to \R^k\setminus \{0\}$ and we set $\gen_X S=\infty$ provided such a
map doesn't exist at all (cf.~\cite[Chap.~44.3]{zeidler:84}). 

\begin{theo}[Existence of eigensolutions]\label{t:exist:CP1perLap}
Let $\Omega\subseteq\R^n$ be open and bounded with Lipschitz boundary, let
$1<p<\frac{n}{n-1}$, let $\alpha>0$, and 
let $\cEP$ satisfy \textup{(E1)-(E4)} and either \textup{(E5')} or \textup{(E5'')}.
Then there exists a sequence of eigenfunctions 
$(\pm u_{k,\alpha})_k$ of the perturbed
eigenvalue problem \eqref{e:E_per1Lap-a},\,\eqref{e:cG_Eper-a}
with $\cG_1(\pm u_{k,\alpha})=\alpha$ and 
where the corresponding critical values 
$c_{k,\alpha}=\cE_{TV}(\pm u_{k,\alpha})+\cE_{\Per}(\pm u_{k,\alpha})$ 
are characterized by  
\begin{align}
c_{k,\alpha}=\inf_{S\in\cS^\alpha_k}\sup_{v\in S}\;
\cE_{TV}(v)+\cE_{\Per}(v)\,\label{e:hatck} 
\end{align}
with 
\begin{align}
\cS^\alpha_k:=\{ S\subseteq L^p(\Omega) \text{ compact, symmetric}\setsep
\cG_1=\alpha\text{ on }S,\: \gen_{L^p} S \ge k\}\,. \label{e:cSkalpha} 
\end{align}
The sequence of critical values $(c_{k,\alpha})_k$ is unbounded.
For each $k\in\N$ there is some $\alpha_1>0$ such that
the family of rescaled eigenfunctions 
$v_{k,\alpha}:=\frac{u_{k,\alpha}}{\|u_{k,\alpha}\|_1}$ is bounded in
$BV(\Omega)$ for $0<\alpha\le\alpha_1$ 
 and any choice of eigenfunctions $u_{k,\alpha}$ having critical value 
$c_{k,\alpha}$.
Moreover, the Eu\-ler-La\-grange equation \eqref{e:ELGEper} from Theorem
\ref{t:ELG} holds for any critical point $u_{k,\alpha}$. 
\end{theo}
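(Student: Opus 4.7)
The plan is to apply an abstract nonsmooth Ljusternik--Schnirelman-type theorem (in the version tailored for lower semicontinuous functionals using the weak slope, as in Degiovanni--Marzocchi) to the functional
\[
\tilde\cE = \cE_{TV}+\cEP+I_{\{\cG_1=\alpha\}}
\]
on the metric space $L^p(\Omega)$, using the genus $\gen_{L^p}$ as the topological index. First I would verify the ingredients needed for that abstract theorem: \emph{(a)} $\tilde\cE$ is even (this combines (E2) with the evenness of $\cE_{TV}$ and $\cG_1$); \emph{(b)} $\tilde\cE$ is lower semicontinuous on $L^p(\Omega)$ since $\cE_{TV}$ is l.s.c.\ on $L^p(\Omega)$ by \eqref{e:form4}, $\cEP$ is continuous by (E1), and $I_{\{\cG_1=\alpha\}}$ is l.s.c.\ because $\cG_1$ is continuous on $L^p(\Omega)$; \emph{(c)} the (PS)-condition at every level, supplied by Proposition~\ref{p:PS-Eper}; and \emph{(d)} the (epi)-condition, which, as remarked in Section~\ref{s:formulation}, is straightforward here because the finite part of $\tilde\cE$ on $K_\alpha$ is the sum of a convex l.s.c.\ function and a locally Lipschitz function, so the generalized subdifferential calculus together with the $L^p$-continuity of $\cG_1$ and $\cEP$ rules out critical points of the form $(u,c)$ with $c>\tilde\cE(u)$ on the epigraph.

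Next I would show that the classes $\cS^\alpha_k$ are non-empty for every $k\in\N$. For this I pick $k$ disjoint balls $B_i\subs\Omega$ and take
\[
S_k:=\Big\{\sum_{i=1}^k t_i\,\chi_{B_i}\;;\;(t_1,\dots,t_k)\in\R^k,\ \cG_1\big(\textstyle\sum t_i\chi_{B_i}\big)=\alpha\Big\}.
\]
After a suitable odd parametrization by $\S^{k-1}$ this is a compact symmetric subset of $L^p(\Omega)$ sitting on $\{\cG_1=\alpha\}$ with $\gen_{L^p}S_k\ge k$. The deformation invariance of the classes $\cS^\alpha_k$ under odd continuous deformations preserving $\cG_1=\alpha$ follows from standard genus calculus (monotonicity and the odd-map property).

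With these ingredients in place, the abstract minimax theorem yields, for every $k\in\N$, a critical point $u_{k,\alpha}$ of $\tilde\cE$ with $\tilde\cE(u_{k,\alpha})=c_{k,\alpha}$, where $c_{k,\alpha}$ is given by \eqref{e:hatck}; evenness then supplies the symmetric pair $\pm u_{k,\alpha}$. To show that the sequence $(c_{k,\alpha})_k$ is unbounded I argue as in the classical Ljusternik--Schnirelman scheme: if $c_{k,\alpha}$ were bounded, the (PS)-condition would force the union of critical sets at bounded levels to be compact, hence of finite genus in $L^p(\Omega)$; combining this with the fact that on $K_\alpha$ the perturbation is controlled via $|\cEP(v)|\le C_{\Per}\alpha^p$ on $\{\cG_1(v)=\alpha\}\cap\{$ bounded in $L^p\}$, exactly as in the unperturbed case treated in \cite{milbers-s:10a}, this would contradict the fact that $\cS^\alpha_k\neq\emptyset$ for arbitrarily large $k$. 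The Euler--Lagrange equation \eqref{e:ELGEper} for each $u_{k,\alpha}$ is then immediate from Theorem~\ref{t:ELG}.

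It remains to establish the $BV$-bound on $v_{k,\alpha}=u_{k,\alpha}/\|u_{k,\alpha}\|_1$ for small $\alpha$. Here I would compare with the unperturbed variational eigenvalues $\lambda_{k,1}$ from \eqref{e:la1k1Lap}: using the test class of rescaled minimax sets from \eqref{e:la1k1Lap} and the estimate $|\cEP(v)|\le C_{\Per}\|v\|_p^p$ combined with the interpolation/embedding bounds from the paper's Section~\ref{s:tools} (in particular $\|v\|_p\le C\,\cE_{TV}(v)^\theta\|v\|_1^{1-\theta}$ coming from $BV\hookrightarrow L^{n/(n-1)}$), one obtains
\[
c_{k,\alpha}\le \alpha\,\lambda_{k,1}+o(\alpha)\quad\text{as }\alpha\downto 0,
\]
so in particular $\cE_{TV}(u_{k,\alpha})/\alpha=\cE_{TV}(v_{k,\alpha})$ stays bounded. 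Because $\cE_{TV}$ is equivalent to the $BV$-norm, this yields the desired boundedness of $v_{k,\alpha}$ in $BV(\Omega)$ for $0<\alpha\le\alpha_1$ and for every choice of eigenfunction with critical value $c_{k,\alpha}$. The main technical obstacle I anticipate is precisely this comparison step: the perturbation $\cEP$ is only controlled in the $L^p$-norm, not directly in $L^1$, so the argument must rely on the subcritical range $1<p<n/(n-1)$ to absorb $\cEP$ into $\cE_{TV}$ via the $BV$-embedding, and this is where condition \eqref{e:alph<<1} (or the global lower bound (E5')) becomes essential.
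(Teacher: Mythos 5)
Your overall strategy is the paper's: apply the abstract nonsmooth Ljusternik--Schnirelman theorem (Theorem~\ref{t:exCP}) to $\cF=\cE_{TV}+\cEP+I_{\{\cG_1=\alpha\}}$ on $L^p(\Omega)$, get (PS) from Proposition~\ref{p:PS-Eper}, get (epi) from the fact that $\cG_1^0(u;-u)=-\alpha<0$ together with \cite[Thm.~3.4]{degiovanni-s:98}, build the test sets $\cS^\alpha_k$ from an odd finite-dimensional embedding of $\S^{k-1}$ (the paper uses linearly independent $\Ccinfty$-functions rather than characteristic functions of disjoint balls, but either works since both land in $BV(\Omega)$), and obtain the $BV$-bound on $v_{k,\alpha}$ by the comparison $c_{k,\alpha}/\alpha\to\lambda_{k,1}$ combined with absorbing $\cEP$ into $\cE_{TV}$ via Corollary~\ref{cor:||pp} (this is exactly Proposition~\ref{p:20} in the paper).

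There are two genuine soft spots. First, your argument for the unboundedness of $(c_{k,\alpha})_k$ does not close: if the $c_{k,\alpha}$ were bounded, the compactness and finite genus of the set of critical points at bounded levels does not by itself contradict $\cS^\alpha_k\neq\emptyset$ for large $k$ --- the classes are nonempty for every $k$ regardless. To get a contradiction you need either the classical deformation step (excise a neighborhood $U$ of the limit critical set with $\gen\overline U=m$ from a near-optimal $S\in\cS^\alpha_{k+m}$ and deform $\overline{S\setminus U}$ below $c_{k,\alpha}$), carried out on the epigraph in this nonsmooth setting, or the paper's route: verify that all sublevel sets $\{\cF\le\gamma\}$ are compact in $L^p(\Omega)$ (this uses (E5'), or (E5'') with \eqref{e:alph<<1} via the estimate \eqref{e:proofs5}) and then apply Blaschke's compactness theorem to the optimal sets $S_k$, whose Hausdorff limit would be a compact set of infinite genus. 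Second, you never verify that $\cF$ is bounded from below, which is a hypothesis of the abstract theorem and is needed for the minimax values to be finite; in case (E5'') this is precisely where the smallness condition \eqref{e:alph<<1} first enters (not only in the rescaling step at the end), again through the absorption estimate $\cF(v)\ge(1-C_{\Per}C_{\textup{BV}}^{(p-1)n}\alpha^{n-(n-1)p})\,\cE_{TV}(v)-C_{\Per}C_{\textup{BV}}^{(p-1)n}\alpha^{n-(n-1)p}$. Both points are repairable with the paper's estimates, but as written they are gaps rather than omitted routine details.
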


The proof of Theo\-rem~\ref{t:exist:CP1perLap} is given in
Section~\ref{ss:proofpertenergy} and essentially relies on a general
existence result for critical points of lower semicontinuous functional 
stated in Theo\-rem~\ref{t:exCP} in Section~\ref{ss:nonsmooth}. 
Let us finally formulate the claimed bifurcation result.

\begin{theo}[Bifurcation]\label{t:EperBifurc}
Let $\Omega\subseteq \R^n$ be open and bounded with Lipschitz boundary, 
let $1<p\le 1+\frac{1}{n}$, let $\alpha>0$ such that \eqref{e:alph<<1} holds,
and let $\cEP$ satisfy \textup{(E1)-(E4)}. 
Moreover, let $(\lambda_{k,\alpha},u_{k,\alpha})_k$ be the eigensolutions of  
\eqref{e:E_per1Lap-a},\,\eqref{e:cG_Eper-a} from Theorem 
\ref{t:exist:CP1perLap} with corresponding critical values 
$c_{k,\alpha}$ and let $(\lambda_{k,1})_k$ be the eigenvalues of the
(unperturbed) 
$1$-Laplace operator according to \eqref{e:la1k1Lap}. Then we have
\[
\lim_{\alpha\to 0} \frac{c_{k,\alpha}}{\alpha} = \lim_{\alpha\to
  0}\lambda_{k,\alpha} = \lambda_{k,1} 
\]
for all $k\in\N$ and, hence, the eigenvalues $\lambda_{k,1}$ of the
unperturbed problem are bifurcation values of the perturbed problem 
\eqref{e:E_per1Lap-a},\,\eqref{e:cG_Eper-a}.
\end{theo}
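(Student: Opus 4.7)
The plan is to first use the Euler--Lagrange equation to reduce the eigenvalue convergence to a statement about the critical values $c_{k,\alpha}$, and then bound $c_{k,\alpha}/\alpha$ from above and below by comparing to the unperturbed minimax.

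For the reduction, Remark~\ref{r:results1a}(2) gives $\lambda_{k,\alpha} = [\cE_{TV}(u_{k,\alpha}) + \langle u^*, u_{k,\alpha}\rangle]/\alpha$ for some $u^* \in \partial\cE_{\Per}(u_{k,\alpha})$, while $c_{k,\alpha} = \cE_{TV}(u_{k,\alpha}) + \cE_{\Per}(u_{k,\alpha})$. Subtracting and using (E3) and (E4) to bound $|\cE_{\Per}(u_{k,\alpha})|$ and $|\langle u^*, u_{k,\alpha}\rangle|$ by a multiple of $C_{\Per}\|u_{k,\alpha}\|_p^p$ yields
\[
\Big|\lambda_{k,\alpha} - \frac{c_{k,\alpha}}{\alpha}\Big|
\le (p+1)\,C_{\Per}\,\frac{\|u_{k,\alpha}\|_p^p}{\alpha}.
\]
By Theorem~\ref{t:exist:CP1perLap} the rescaled eigenfunction $v_{k,\alpha} := u_{k,\alpha}/\alpha$ is bounded in $BV(\Omega)$ for $\alpha\le\alpha_1$; since $p<\frac{n}{n-1}$, the embedding $BV(\Omega)\hookrightarrow L^p(\Omega)$ gives $\|u_{k,\alpha}\|_p\le C\alpha$, so the right-hand side is $O(\alpha^{p-1})\to 0$. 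It thus suffices to prove $c_{k,\alpha}/\alpha \to \lambda_{k,1}$.

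For the upper bound, the $1$-homogeneity of $\cE_{TV}$ and $\cG_1$ makes the map $S\mapsto\alpha S$ an odd bijection between $\cS^1_k$ and $\cS^\alpha_k$; I use here that $\lambda_{k,1}$ is equivalently realized by a minimax with $\gen_{L^p}$ in place of $\gen_{L^1}$, as established in Littig--Schuricht~\cite{littig-s:13}. Given $\epsilon>0$, choose $S^*_\epsilon\in\cS^1_k$ with $\sup_{v\in S^*_\epsilon}\cE_{TV}(v)\le\lambda_{k,1}+\epsilon$; its $L^p$-radius $R_\epsilon$ is controlled by the $BV\hookrightarrow L^p$ embedding. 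Using (E3),
\[
c_{k,\alpha}\le\sup_{v\in\alpha S^*_\epsilon}[\cE_{TV}(v)+\cE_{\Per}(v)]
\le \alpha(\lambda_{k,1}+\epsilon) + C_{\Per}\,\alpha^p R_\epsilon^p,
\]
so $\limsup_{\alpha\to 0}c_{k,\alpha}/\alpha\le\lambda_{k,1}+\epsilon$ and, letting $\epsilon\to 0$, $\limsup\le\lambda_{k,1}$.

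For the lower bound, given a near-optimizer $S_\alpha\in\cS^\alpha_k$ with $\sup_{S_\alpha}[\cE_{TV}+\cE_{\Per}]\le c_{k,\alpha}+\alpha\epsilon$, I rescale to $\tilde S_\alpha := S_\alpha/\alpha\in\cS^1_k$ and set $\tilde\Phi_\alpha(\tilde v):=\cE_{TV}(\tilde v)+\cE_{\Per}(\alpha\tilde v)/\alpha$. Combining the upper bound from Step~2 with the $BV$-estimate $|\cE_{\Per}(\alpha\tilde v)/\alpha|\le C_{\Per}C_{BV,p}^p\,\alpha^{p-1}\cE_{TV}(\tilde v)^p$, valid on $BV$ via the embedding, each $\tilde v\in\tilde S_\alpha$ satisfies a nonlinear inequality forcing $\cE_{TV}(\tilde v)$ into one of two branches: a \emph{low} interval $[0,t_{1,\alpha}]$ with $t_{1,\alpha}\to\lambda_{k,1}+1$, or a \emph{high} interval $[t_{2,\alpha},\infty)$ with $t_{2,\alpha}\to\infty$ as $\alpha\to 0$. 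Lower semicontinuity of $\cE_{TV}$ makes the induced decomposition $\tilde S_\alpha=\tilde S^<_\alpha\sqcup\tilde S^>_\alpha$ into disjoint clopen symmetric compact pieces. If $\gen_{L^p}\tilde S^<_\alpha\ge k$, then $\tilde S^<_\alpha\in\cS^1_k$ is $BV$-bounded (by $t_{1,\alpha}$), whence $\cE_{\Per}(\alpha\cdot)/\alpha=O(\alpha^{p-1})$ uniformly there, and
\[
\frac{c_{k,\alpha}}{\alpha}+\epsilon
\ge \sup_{\tilde S^<_\alpha}\tilde\Phi_\alpha
\ge \sup_{\tilde S^<_\alpha}\cE_{TV} - O(\alpha^{p-1})
\ge \lambda_{k,1} - O(\alpha^{p-1}),
\]
giving $\liminf c_{k,\alpha}/\alpha\ge\lambda_{k,1}$.

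The main obstacle is to rule out $\gen_{L^p}\tilde S^<_\alpha<k$; subadditivity of the genus then only gives $\gen_{L^p}\tilde S^>_\alpha\ge 1$, with $\tilde S^>_\alpha$ consisting of highly oscillatory functions (forced to satisfy $\cE_{TV}(\tilde v),\|\tilde v\|_p\gtrsim\alpha^{-1}$ by the sup constraint together with (E3)). The remedy I expect to use is an odd continuous deformation—built from the pseudo-gradient flow for the weak slope of $\cE_{TV}$ on $\{\cG_1=1\}$ in the spirit of Degiovanni--Marzocchi~\cite{degiovanni-m:94}—that retracts $\tilde S^>_\alpha$ into the moderate-$\cE_{TV}$ region while preserving $\cG_1=1$ and without increasing $\tilde\Phi_\alpha$ by more than $o(1)$. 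This transfers the genus of $\tilde S^>_\alpha$ into $\tilde S^<_\alpha$ and reduces to the case treated above. Combined with the upper bound, $c_{k,\alpha}/\alpha\to\lambda_{k,1}$, and by Step~1 also $\lambda_{k,\alpha}\to\lambda_{k,1}$.
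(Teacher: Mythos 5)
Your Step~1 (reducing $\lambda_{k,\alpha}\to\lambda_{k,1}$ to $c_{k,\alpha}/\alpha\to\lambda_{k,1}$ via the Euler--Lagrange identity and (E3)--(E4)) and your Step~2 (the upper bound by transporting a near-optimal $S^*_\epsilon\in\cS^1_k$ to $\alpha S^*_\epsilon\in\cS^\alpha_k$) are both correct and close to what the paper does (Proposition~\ref{p:20} and the $\limsup$ estimate, respectively). The lower bound, however, has a genuine gap. You control the perturbation on $\{\|\cdot\|_1=1\}$ by $|\cE_{\Per}(\alpha\tilde v)|/\alpha\le C\,\alpha^{p-1}\cE_{TV}(\tilde v)^{p}$, i.e.\ by a term \emph{superlinear} in $\cE_{TV}(\tilde v)$; this forces the two-branch dichotomy and leaves you needing to show that the genus of a near-optimal set is carried by the low branch. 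The deformation you propose to achieve this is only announced, not constructed, and it cannot work as described: to push $\tilde S^>_\alpha$ (where $\cE_{TV}\ge t_{2,\alpha}\to\infty$) down to $\{\cE_{TV}\le t_{1,\alpha}\}$ by a genus-preserving flow you would have to cross every critical level $\lambda_{j,1}$ lying in between, and the minimax characterization of those very levels is precisely the obstruction to such a deformation. So as written the $\liminf$ inequality is not established.

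The missing ingredient is the second estimate of Corollary~\ref{cor:||pp}: since $p\le 1+\frac1n$ one has $(p-1)n\le1$ and hence
\begin{equation*}
\|u\|_p^p\le C_{\textup{BV}}^{(p-1)n}\,\|u\|_1^{\,n-(n-1)p}\bigl(\cE_{TV}(u)+1\bigr)\,,
\end{equation*}
which is \emph{linear} in $\cE_{TV}(u)$. On $\{\|\tilde v\|_1=1\}$ this gives
$|\cE_{\Per}(\alpha\tilde v)|/\alpha\le \alpha^{p-1}C_{\Per}C_{\textup{BV}}^{(p-1)n}\bigl(\cE_{TV}(\tilde v)+1\bigr)$ uniformly, so
\begin{equation*}
\tilde\Phi_\alpha(\tilde v)\ \ge\ \bigl(1-\varepsilon_\alpha\bigr)\cE_{TV}(\tilde v)-\varepsilon_\alpha\,,
\qquad \varepsilon_\alpha:=\alpha^{p-1}C_{\Per}C_{\textup{BV}}^{(p-1)n}\to0\,,
\end{equation*}
and taking $\inf_{S\in\cS^1_k}\sup_{\tilde v\in S}$ of both sides yields
$\hat c_{k,\alpha}\ge(1-\varepsilon_\alpha)\lambda_{k,1}-\varepsilon_\alpha$ directly --- no decomposition, no BV-boundedness of the competitor sets, and no deformation are needed. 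This is where the hypothesis $p\le1+\frac1n$ (equivalently \eqref{e:alph<<1} being attainable) actually enters, and it is exactly the route the paper takes; the same linear-in-$\cE_{TV}$ bound also streamlines your upper bound. With this replacement your argument closes.
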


\subsection{Perturbation of the constraint}
\label{ss:pertconst}

For $\beta>0$ and $p>1$ we now consider perturbed eigenvalue problems of the
$1$-Laplace operator of the form
\begin{equation}
 \cE_{TV}(v)\to \Min_{v\in L^p(\Omega)} \label{e:VP_Gpert1}
\end{equation}
subject to
\begin{equation}
\cG_1(v)+\cG_{\Per}(v)=\beta\,\label{e:VP_Gpert2}
\end{equation}
(cf.~\eqref{e:form6}, \eqref{e:form7}) where eigenfunctions had been defined
to be critical points of 
\[ \cE_{TV} + I_{\{\cG_1 + \cG_{Per}=\beta\}}  \]
on $L^p(\Omega)$. For the perturbation $\cGP:L^p(\Omega)\to\R$ we assume that
\bgl
\item[(G0)]
 $1<p<1+\frac{1}{n}$\,,
\item[(G1)]
 $\cG_{\Per}$ is locally Lipschitz continuous on $L^p(\Omega)$,
\item[(G2)]
 $\cGP$ is even, i.e. $\cGP(v)=\cGP(-v)$
for all $v\in L^p(\Omega)$,
\item[(G3)] there exists a constant $C_{\Per}>0$ such that for all $v\in
  L^p(\Omega)$ 
\begin{align}
 0\le \cG_{\Per}(v)&\le C_{\Per}\|v\|_p^p \,,\label{e:G_1est}
\end{align}
\item[(G4)] for all $v^*\in\partial \cGP(v)$ and all
$v\in L^p(\Omega)$ one has
\begin{align} 
  \|v^*\|_{p'}&\le p\,C_{\Per} \|v\|_p^{p-1}\,,\label{e:u^*le_u^p-1}
\end{align}
\item[(G5)] for all $v^*\in\partial \cGP(v)$ and all
$v\in L^p(\Omega)$ one has
\begin{align}
\left.
\begin{array}{ll}
v^*(x)>-1 & \text{ for } v(x)>0\\
v^*(x)<1 & \text{ for } v(x) <0
\end{array} \;\right\}
\quad \text{for a.e. } x\in \Omega\,.
\label{e:u^*u>-|u|}
\end{align}
\el
Condition (G5) is in particular needed for the (PS)-con\-di\-tion.
Clearly \eqref{e:u^*u>-|u|} is equivalent to 
\[
v^*(x)\frac{v(x)}{|v(x)|}>-1 \qquad \text{for a.e. $x\in \Omega$
with $v(x)\neq 0$}
\]
and, thus, \eqref{e:u^*u>-|u|} implies
\begin{align}
v^*(x)v(x) > -|v(x)|\label{e:u^*u>-|u|2} \qquad \text{for a.e. $x\in \Omega$
with $v(x)\neq 0$}\,.
\end{align}

Similar to the previous case, conditions (G1)-(G4) are satisfied in the case
of the Nemytskii potential  
\[ \cG_{\Per}(v)=\int_{\Omega}\int_0^{v(x)}g(x,s)\d s \d x \, \]
if the integrand $g$ satisfies (f1)-(f3) and, in addition, 
\begin{align} \label{e:results2}
G(x,t):=\int_0^t g(x,s)\d s \ge 0 
\quad\text{for all $t\in\R$ and a.e. $x\in\Omega$}\,
\end{align}
(cf.~Theorem \ref{t:cF}). For (G5) it is sufficient to require that
\begin{align} \label{e:results3}
 g(x,s) > -1 \quad\text{a.e. on $\Omega\times [0,\infty)$}\,, 
\end{align}
since $g(x,\cdot)$ is odd by (f3).  
It is not difficult to show that \eqref{e:results2} and convexity of 
$G(x,\cdot)$ imply $g\ge 0$ on $\Omega\times [0,\infty)$ 
and, thus, condition \eqref{e:u^*u>-|u|} is valid. Let us start with some
Euler-Lagrange equation as necessary condition for critical points. The proof
can be found in Section~\ref{ss:proofPertconstr} below.

\begin{theo}[Euler-Lagrange equation]\label{t:ELGGper}
Let $\Omega\subseteq\R^n$ be open and bounded with Lipschitz boundary, let
$1<p<1+\frac{1}{n}$, let $\cGP$ satisfy \textup{(G1)-(G5)}, and let  
$u$ be a critical point of variational problem
\eqref{e:VP_Gpert1},\,\eqref{e:VP_Gpert2} for some $\beta >0$.
Then there is some $s\in L^{\infty}(\Omega)$ with 
\[
s(x)\in \Sgn(u(x)) \quad\text{for a.e. }x\in\Omega\,,  
\] 
a vector field $z\in L^\infty(\Omega,\R^n)$ with
\[
\Div z\in L^{p'}(\Omega)\,,\quad \|z\|_\infty= 1\,,\quad \text{ and }\quad
\cE_{TV}(u)=-\int_\Omega \Div z\, u \d x\,, 
\]
some $u^*\in \partial \cGP(u)\subseteq L^{p'}(\Omega)$ and a Lagrange
multiplier $\lambda\in \R$ 
such that the Eu\-ler-La\-grange equation
\begin{align}
-\Div z = \lambda (s + u^*)\label{e:ELGGper}
\quad\text{on } \Omega
\end{align}
is satisfied. 
In the case where 
\[ \cG_{\Per}(v)=\int_{\Omega}\int_0^{v(x)}g(x,s)\d s \d x \, \]
with $g$ satisfying \textup{(f1)-(f3)}, \eqref{e:results2}, and
\eqref{e:results3}, we have  
\[ 
u^*(x)\in  \Big[\essinf_{s\to u(x)}g(x,s), \esssup_{s\to u(x)}g(x,s)\Big]\quad
\text{ for a.e. $x\in\Omega$\,.} 
\]
If, in addition, $g(x,\cdot)$ is continuous for a.e. $x\in\Omega$, then 
$u^*(x)=g(x,u(x))$ a.e. on $\Omega$ and \eqref{e:ELGGper} becomes
\[
 -\Div z = \lambda \big(s+g(x,u)\big) \qquad\text{on }\:\Omega\,.
\]
\end{theo}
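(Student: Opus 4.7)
The plan is to translate the weak-slope criticality $|d\cE|(u)=0$ for $\cE:=\cE_{TV}+I_{\{\cG_1+\cGP=\beta\}}$ into a nonsmooth Lagrange multiplier inclusion in $L^{p'}(\Omega)$ and then identify each piece explicitly. The argument runs in parallel with the proof of Theorem~\ref{t:ELG}, the difference being that the perturbation now enters through the constraint rather than through the energy, so the multiplier $\lambda$ has to absorb both $\partial\cG_1(u)$ and $\partial\cGP(u)$.

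First I would invoke the nonsmooth Lagrange multiplier rule for weak-slope critical points (in the framework of \cite{degiovanni-m:94} and the tools gathered in Section~\ref{s:tools}). Setting $h:=\cG_1+\cGP$, which is locally Lipschitz on $L^p(\Omega)$ by~(G1), the rule yields $\lambda\in\R$ together with $\xi\in\partial\cE_{TV}(u)\subseteq L^{p'}(\Omega)$ and $\eta\in\partial h(u)$ with $\xi+\lambda\eta=0$. The qualification condition required is $0\notin\partial h(u)$: since $\beta>0$ forces $u\not\equiv 0$, condition~(G5) prevents any $s\in\partial\cG_1(u)$ and $u^*\in\partial\cGP(u)$ from satisfying $s+u^*=0$ on the set $\{u\ne 0\}$ (on $\{u>0\}$ one would need $u^*=-1$ and on $\{u<0\}$ one would need $u^*=1$, both forbidden by~(G5)), which is exactly the transversality needed.

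Next I decompose $\eta=s+u^*$ via Clarke's sum rule for the locally Lipschitz $\cGP$ and the convex $\cG_1$: convex subdifferential calculus for $\cG_1(v)=\|v\|_1$ identifies $s$ with an element of $L^\infty(\Omega)$ satisfying $s(x)\in\Sgn(u(x))$ a.e., and (G4) places $u^*\in L^{p'}(\Omega)$. For $\xi\in\partial\cE_{TV}(u)$ I use the standard dual representation (cf.~\cite{kawohl-s:07}): $\xi=-\Div z$ with $z\in L^\infty(\Omega,\R^n)$, $\|z\|_\infty\le 1$, $\Div z\in L^{p'}(\Omega)$, together with the Fenchel extremality identity $\cE_{TV}(u)=\langle\xi,u\rangle=-\int_\Omega u\,\Div z\d x$. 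Combining this with the a priori bound $-\int_\Omega u\,\Div z\d x\le\|z\|_\infty\cE_{TV}(u)$ and with $\cE_{TV}(u)>0$ (which holds since $u\not\equiv 0$ and the boundary trace is absorbed in \eqref{e:defETV}) forces $\|z\|_\infty=1$. Putting the three pieces together gives $-\Div z+\lambda(s+u^*)=0$, which after relabeling the sign of $\lambda$ is the Euler-Lagrange equation~\eqref{e:ELGGper}.

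The Nemytskii case follows directly from Theorem~\ref{t:cF}(d) applied to $g$ in place of $f$: assumptions (f1)--(f3) on $g$, together with \eqref{e:results2} and \eqref{e:results3}, ensure that $\cGP$ meets (G1)--(G5), and part~(d) of Theorem~\ref{t:cF} yields $u^*(x)\in[\essinf_{s\to u(x)}g(x,s),\esssup_{s\to u(x)}g(x,s)]$ a.e.; continuity of $g(x,\cdot)$ then collapses the bracket to $g(x,u(x))$. The main obstacle in the whole scheme is the nonsmooth Lagrange multiplier step itself: $\cE_{TV}$ is only lower semicontinuous on $L^p$ with a delicate subdifferential structure, and the constraint function $h$ is merely locally Lipschitz and nondifferentiable at points where $u$ vanishes, so extracting a genuine multiplier $\lambda$ requires the transversality built into~(G5). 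This is precisely why the strict pointwise condition~(G5) has to be imposed rather than only its weaker integrated consequence \eqref{e:u^*u>-|u|2}.
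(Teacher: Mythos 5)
Your proposal is correct and follows essentially the same route as the paper: there the abstract multiplier rule \cite[Cor.~3.7]{degiovanni-s:98} is applied with $f_0=\cE_{TV}$, $f_1=0$, $g_1=\cG_1+\cGP-\beta$, and the transversality you extract from (G5) is exactly the strict inequality $(\cG_1+\cGP)^0(u;-u)<0$ of Lemma~\ref{l:propG1Per}, used to verify the (epi)-condition with $u_-=0$, $u_+=2u$. The identification of $z$ and $s$ and the Nemytskii case are handled, as in your sketch, via \cite[Prop.~4.23]{kawohl-s:07} and Theorem~\ref{t:cF}.
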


\begin{rem}
With eigenfunction $u$ as test function in 
\eqref{e:ELGGper}, we get for the corresponding eigenvalue
\[
  \lambda = \frac{\cE_{TV}(u)}{\cG_1(u)+\<u^*,u\>}\,
\]
for some $u^*\in \partial \cGP(u)$.
In the unperturbed case we have $\lambda=\frac{\cE_{TV}(u)}{\beta}$ and the
eigenvalue is uniquely determined by the eigenfunction. However, 
for the perturbed problem it is not clear 
whether the eigenvalue $\lambda$ is uniquely 
determined by the eigenfunction~$u$ (cf.~also 
Remark \ref{r:results1a} about the perturbation of the energy).
\end{rem}

Next we formulate our main result about the existence of eigensolutions of the
perturbed problem \eqref{e:VP_Gpert1},\,\eqref{e:VP_Gpert2}.

\begin{theo}[Existence of eigensolutions]\label{t:CP_Gpert} 
Let $\Omega\subseteq\R^n$ be open and bounded with Lipschitz boundary, let
$1<p<1+\frac{1}{n}$, let $\beta>0$, and let $\cGP$ satisfy 
conditions \textup{(G1)-(G5)}.
Then there exists a sequence of eigenfunctions 
$(\pm u_{k,\beta})_k$ of the perturbed
eigenvalue problem \eqref{e:VP_Gpert1},\,\eqref{e:VP_Gpert2}
where the corresponding critical
values $c_{k,\beta}=\cE_{TV}(\pm u_{k,\beta})$ are characterized by 
\begin{align}
 c_{k,\beta} = \inf_{S\in \cS_k^\beta} \sup_{u\in S} \:\cE_{TV}(u)\,,\label{e:hatckb}
\end{align}
with 
\begin{align} 
  \begin{split}  
  \cS_k^\beta:=\{ S\subseteq L^p(\Omega) \text{ compact, symmetric}\setsep
  \hspace{43mm} \\
  \cG_1+\cG_{\Per}=\beta\text{ on }S,\ \gen_{L^p} S \ge k\}\,. \label{e:hatSkb}
\end{split}
\end{align}
The sequence of critical values $(c_{k,\beta})_k$ is unbounded.
For each $k\in\N$ the family of rescaled eigenfunctions 
$v_{k,\beta}:=\frac{u_{k,\beta}}{\|u_{k,\beta}\|_1}$ is bounded in $BV(\Omega)$ for
$\beta>0$ from bounded sets (in particular as $\beta \to 0$) and any choice of eigenfunctions $u_{k,\beta}$ having critical value 
$c_{k,\beta}$. 
Moreover, the Eu\-ler-La\-grange equation \eqref{e:ELGGper} from 
Theorem~\ref{t:ELGGper} holds for any critical point $u_{k,\beta}$. 
\end{theo}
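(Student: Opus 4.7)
The plan is to apply the general Lusternik--Schnirelman-type existence result Theorem~\ref{t:exCP} to the even lower semicontinuous functional
\[
\cE \;:=\; \cE_{TV} + I_{\{\cG_1+\cG_{\Per}=\beta\}}
\]
on $L^p(\Omega)$, using $\gen_{L^p}$ as topological index. Evenness of $\cE$ follows from (G2) and the symmetry of $\cE_{TV}$; lower semicontinuity holds because $\cE_{TV}$ is lower semicontinuous on $L^p(\Omega)$, $\cG_1$ and $\cG_{\Per}$ are continuous there, so the constraint set $K_\beta$ is closed. The two nontrivial hypotheses to verify in order to invoke Theorem~\ref{t:exCP} are the Pa\-lais-Smale condition and the epigraph condition; once these are in hand the minimax values $c_{k,\beta}$ and associated critical points $u_{k,\beta}$ follow by the standard genus argument for even functionals.

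For the (PS)-con\-di\-tion I would proceed along the lines indicated in the introduction: any Pa\-lais-Smale sequence $(u_j)$ satisfies $\cE_{TV}(u_j)\to c$, hence is bounded in $BV(\Omega)$ via the Poincaré inequality, and so the compact embedding $BV(\Omega)\hookrightarrow L^p(\Omega)$ (valid for $1<p<\tfrac{n}{n-1}$, guaranteed by (G0)) delivers a subsequence converging strongly in $L^p(\Omega)$. Continuity of $\cG_1+\cG_{\Per}$ keeps the limit in $K_\beta$. The subtle point is to turn convergence of $u_j$ into convergence of $\cE(u_j)=\cE_{TV}(u_j)$: here the weak-slope condition yields an approximate Lagrange-multiplier identity, and condition (G5) is exactly what is needed to keep the differential of $\cG_1+\cG_{\Per}$ uniformly away from zero along the sequence, so the multiplier stays bounded and lower semicontinuity upgrades to equality at the limit.

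The main obstacle is the \textbf{(epi)-condition}, which must rule out artificial critical points coming from the $+\infty$ values of $\cE$ outside $K_\beta$. Following the Degiovanni--Marzocchi strategy, the idea is to show that for any non-critical $u\in K_\beta$ with $\cE_{TV}(u)<\infty$ one can construct, locally in $L^p$, an admissible deformation that preserves $\cG_1+\cG_{\Per}=\beta$ and strictly decreases $\cE_{TV}$. The genuine difficulty is the non-smoothness of $\cG_1$ coming from $|v|$ together with the perturbation $\cG_{\Per}$; one has to build an explicit local projection onto the constraint hypersurface. Once again condition (G5) is essential: because $v^*>-1$ on $\{v>0\}$ (and $v^*<1$ on $\{v<0\}$) for every $v^*\in\partial\cG_{\Per}(v)$, the generalized subdifferential of $\cG_1+\cG_{\Per}$ never collapses on the relevant tangential direction, so the constraint is locally a genuine hypersurface in $L^p$ and a retraction along it can be defined. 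This is the step I expect to consume the most work.

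Once both conditions are verified, Theorem~\ref{t:exCP} yields the sequence $(\pm u_{k,\beta})_k$ with critical values $c_{k,\beta}$ characterized by the minimax formula \eqref{e:hatckb}. Unboundedness of $(c_{k,\beta})_k$ is then a standard consequence of the (PS)-con\-di\-tion combined with the fact that any compact symmetric subset of $L^p(\Omega)\setminus\{0\}$ has finite genus. The $BV$-bound on the rescaled eigenfunctions $v_{k,\beta}=u_{k,\beta}/\|u_{k,\beta}\|_1$ follows from $1$-homogeneity of $\cE_{TV}$ together with two a priori estimates: a uniform upper bound on $c_{k,\beta}$ for $\beta$ in bounded sets, obtained by inserting into the minimax \eqref{e:hatckb} a suitable rescaling of a set realizing $c_{k,1}$ and using (G3) to control the constraint error; and a lower bound of the form $\|u_{k,\beta}\|_1\ge c\beta$, coming from $\cG_1=\beta-\cG_{\Per}\ge\beta-C_{\Per}\|u_{k,\beta}\|_p^p$ combined with the $L^p$-control inherent in the minimax. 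Finally, the Euler-Lagrange equation is immediate by applying Theorem~\ref{t:ELGGper} to each $u_{k,\beta}$.
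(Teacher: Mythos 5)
Your overall strategy coincides with the paper's: apply Theorem~\ref{t:exCP} to $\cF=\cE_{TV}+I_{\{\cG_1+\cG_{\Per}=\beta\}}$, obtain (PS) from the compact embedding $BV(\Omega)\hookrightarrow L^p(\Omega)$, use (G5) for the (epi)-condition, and invoke Theorem~\ref{t:ELGGper} for the Euler--Lagrange equation. Three remarks before the main point. First, your multiplier argument for (PS) is superfluous: in the weak-slope framework used here a Palais--Smale sequence only has to admit a subsequence converging in the metric of $L^p(\Omega)$, which boundedness in $BV(\Omega)$ already provides. Second, you never verify hypothesis (F5), i.e.\ that the classes $\cS_k^\beta$ are nonempty with finite minimax values; this is where the paper needs Lemma~\ref{l:t_u}, namely that for each $u\neq0$ there is a unique $t_u>0$ with $(\cG_1+\cG_{\Per})(t_uu)=\beta$, depending continuously and oddly on $u$, so that the radial retraction $\Psi_\beta$ carries an odd map of $\S^{k-1}$ into the constraint set. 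Third, for the (epi)-condition the concrete mechanism behind your ``the subdifferential never collapses'' heuristic is the inequality $(\cG_1+\cG_{\Per})^0(u;-u)<0$ of Lemma~\ref{l:propG1Per} (a direct consequence of (G5) via \eqref{e:u^*u>-|u|2}), which feeds into \cite[Thm.~3.4]{degiovanni-s:98} with $u_-=0$, $u_+=2u$; the same lemma gives the strict monotonicity of $t\mapsto(\cG_1+\cG_{\Per})(tu)$ needed for $\Psi_\beta$.

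The genuine gap is in your argument for the $BV$-bound on $v_{k,\beta}=u_{k,\beta}/\|u_{k,\beta}\|_1$. You propose to combine a \emph{uniform} upper bound on $c_{k,\beta}$ for $\beta$ in bounded sets with a lower bound $\|u_{k,\beta}\|_1\ge c\beta$; but these only give $\cE_{TV}(v_{k,\beta})=c_{k,\beta}/\|u_{k,\beta}\|_1\le C/(c\beta)$, which blows up as $\beta\to0$ --- precisely the regime the theorem must control. What is needed is the linear-in-$\beta$ bound $c_{k,\beta}\le\lambda_{k,1}\,\beta$, which the paper obtains in \eqref{e:Ckb_le_lk} from $\|u\|_1\le\beta$ on the constraint set (a consequence of (G3)), the $1$-homogeneity of $\cE_{TV}$, and the fact that $\Phi_\beta$ identifies $\cS_k^\beta$ with $\cS_k^1$. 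Moreover, your lower bound $\|u_{k,\beta}\|_1\ge\beta-C_{\Per}\|u_{k,\beta}\|_p^p\ge c\beta$ itself presupposes control of $\|u_{k,\beta}\|_p^p$, which in turn requires the linear bound on $c_{k,\beta}$, so as stated the two estimates are circular. The paper closes the loop differently: combining $c_{k,\beta}/\beta\le\lambda_{k,1}$ with the inequality $\beta\le\|u\|_1\bigl(1+C\beta^{p-1}\cE_{TV}(u/\|u\|_1)^{(p-1)n}\bigr)$ from Corollary~\ref{cor:||pp} yields the self-improving estimate $\cE_{TV}(v_{k,\beta})\le\lambda_{k,1}\bigl(1+C\beta^{p-1}\cE_{TV}(v_{k,\beta})^{(p-1)n}\bigr)$, whose right-hand side is sublinear in $\cE_{TV}(v_{k,\beta})$ because $(p-1)n<1$; this is exactly where hypothesis (G0), absent from your sketch, is used. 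With these corrections your outline matches the paper's proof.
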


The proof of Theorem \ref{t:CP_Gpert} is given in 
Section~\ref{ss:proofPertconstr}. It is again based on the general critical
point Theorem \ref{t:exCP} stated in Section \ref{ss:nonsmooth}. Finally we
formulate the intended bifurcation result. 

\begin{theo}[Bifurcation]\label{t:bifb}
Let $\Omega\subseteq\R^n$ be open and bounded with Lipschitz boundary, let
$1<p<1+\frac{1}{n}$, let $\beta>0$, and let $\cGP$ satisfy \textup{(G1)-(G5)}.
Moreover, let $(\lambda_{k,\beta},u_{k,\beta})_k$ be the eigensolutions of  
\eqref{e:VP_Gpert1},\,\eqref{e:VP_Gpert2} from Theorem 
\ref{t:CP_Gpert} with corresponding critical values 
$c_{k,\beta}$ and let $(\lambda_{k,1})_k$ be the eigenvalues of the
(unperturbed) 
$1$-Laplace operator according to \eqref{e:la1k1Lap}. Then we have
\begin{align}\label{e:ckb/b=lb=l}
\lim_{\beta\to 0} \frac{c_{k,\beta}}{\beta} = \lim_{\beta\to
  0}\lambda_{k,\beta} = \lambda_{k,1} 
\end{align}
for all $k\in\N$ and, hence, the eigenvalues $\lambda_{k,1}$ of the
unperturbed problem are bifurcation values of the perturbed problem 
\eqref{e:VP_Gpert1},\,\eqref{e:VP_Gpert2}.
\end{theo}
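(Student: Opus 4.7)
\textbf{Plan for the proof of Theorem~\ref{t:bifb}.}
My plan is first to prove $c_{k,\beta}/\beta \to \lambda_{k,1}$ and then to upgrade this to convergence of $\lambda_{k,\beta}$ itself. For the latter, testing the Euler--Lagrange equation \eqref{e:ELGGper} against $u_{k,\beta}$ and substituting $\cG_1(u_{k,\beta})=\beta-\cGP(u_{k,\beta})$ yields
\[
 \lambda_{k,\beta} = \frac{c_{k,\beta}}{\beta-\cGP(u_{k,\beta})+\<u^{*},u_{k,\beta}\>}.
\]
By Theorem~\ref{t:CP_Gpert} the rescaled eigenfunctions $v_{k,\beta}=u_{k,\beta}/\|u_{k,\beta}\|_1$ are bounded in $BV(\Omega)$ and hence in $L^p(\Omega)$ since $p<\tfrac{n}{n-1}$. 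Combined with $\|u_{k,\beta}\|_1\le\beta$ this gives $\|u_{k,\beta}\|_p=O(\beta)$, whence (G3) and (G4) force $|\cGP(u_{k,\beta})|+|\<u^{*},u_{k,\beta}\>|=O(\beta^{p})$. Because $p>1$ the denominator equals $\beta(1+o(1))$, so $\lambda_{k,\beta}$ and $c_{k,\beta}/\beta$ share the same limit.

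For the upper bound $\limsup_{\beta\downto 0}c_{k,\beta}/\beta\le\lambda_{k,1}$, I would take any $S$ admissible in \eqref{e:la1k1Lap}; restricting to $\sup_{S}\cE_{TV}\le\lambda_{k,1}+1$ makes $S$ $BV$-bounded, and on $BV$-bounded sets the $L^1$- and $L^p$-topologies coincide via interpolation between $L^1$ and $L^{n/(n-1)}$, so $\gen_{L^p}(S)=\gen_{L^1}(S)\ge k$. For each $v\in S$ set $h_v(t):=t+\cGP(tv)$. Since $h_v(0)=0$ and $h_v(\beta)\ge\beta$ by (G3), a root exists; writing $M:=\sup_{v\in S}\|v\|_p$, the Lipschitz bound from (G4) on $B(0,\beta M)$ shows that $h_v$ is strictly increasing on $[0,\beta]$ as soon as $p\,C_{\Per}\,\beta^{p-1}M^{p}<1$. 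Then $t(v)\in(0,\beta]$ with $h_v(t(v))=\beta$ is unique, continuous in $v$, and satisfies $t(-v)=t(v)$ by (G2). The set $\hat S:=\{t(v)v:v\in S\}$ is $BV$-bounded, $L^p$-compact, lies on $\{\cG_1+\cGP=\beta\}$, and $v\mapsto t(v)v$ is an odd continuous bijection $S\to\hat S$ with continuous inverse $w\mapsto w/\cG_1(w)$ (using $\cG_1(w)=t(v)\ge\beta/2$ for $\beta$ small), so $\gen_{L^p}(\hat S)\ge k$ and $\hat S\in\cS^\beta_k$. Positive homogeneity of $\cE_{TV}$ yields
\[
 c_{k,\beta}\le\sup_{v\in S}t(v)\,\cE_{TV}(v)\le\beta\sup_{v\in S}\cE_{TV}(v),
\]
and since $t(v)/\beta\to 1$ uniformly on $S$ as $\beta\downto 0$, dividing by $\beta$ and taking the infimum over $S$ gives the claim.

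For the lower bound, fix $\varepsilon>0$ and pick $S_\beta\in\cS^\beta_k$ with $\sup_{S_\beta}\cE_{TV}\le c_{k,\beta}+\varepsilon\beta$. Every $u\in S_\beta$ is nonzero (else $\cG_1+\cGP=0\neq\beta$), so $u\mapsto u/\cG_1(u)$ is continuous and odd and carries $S_\beta$ to $\tilde S$ with $\cG_1\equiv 1$. By the already-proven upper bound, $c_{k,\beta}/\beta$ is eventually bounded, hence $\cE_{TV}(u)=O(\beta)$ on $S_\beta$; the embedding $BV\hookrightarrow L^p$ then gives $\|u\|_p=O(\beta)$, so (G3) forces $\cGP(u)=O(\beta^{p})$ and $\cG_1(u)=\beta(1+o(1))$ uniformly on $S_\beta$. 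Since $\tilde S$ is $BV$-bounded we again have $\gen_{L^1}(\tilde S)=\gen_{L^p}(\tilde S)\ge k$, so $\tilde S$ is admissible in \eqref{e:la1k1Lap} and
\[
 \lambda_{k,1}\le\sup_{\tilde u\in\tilde S}\cE_{TV}(\tilde u)\le\frac{c_{k,\beta}+\varepsilon\beta}{\beta(1+o(1))}.
\]
Sending $\beta\to 0$ and then $\varepsilon\to 0$ yields $\liminf c_{k,\beta}/\beta\ge\lambda_{k,1}$, completing the first half of the argument.

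The main obstacle is the careful construction in the upper bound of the radial rescaling $v\mapsto t(v)v$ as an odd continuous bijection that preserves the genus class; this rests on using (G3) and (G4) to keep $\cGP$ strictly dominated by $\cG_1$ along rays once $\beta$ is small relative to the $L^p$-diameter of $S$. The remaining ingredients---equivalence of the $L^1$- and $L^p$-topologies on $BV$-bounded sets, and the $O(\beta^{p})$ smallness of $\cGP(u_{k,\beta})$ and $\<u^{*},u_{k,\beta}\>$---are direct consequences of the Sobolev--$BV$ embedding together with the $BV$-boundedness of $v_{k,\beta}$ provided by Theorem~\ref{t:CP_Gpert}.
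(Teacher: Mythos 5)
Your argument is correct and follows essentially the same route as the paper: the radial map between the perturbed constraint set $\{\cG_1+\cGP=\beta\}$ and the $L^1$-unit sphere is an odd, genus-preserving homeomorphism (the paper's Lemma~\ref{l:t_u}), the two-sided estimate $\|u\|_1\le\beta\le\|u\|_1(1+o(1))$ converts minimax values, and the eigenvalue limit comes from testing \eqref{e:ELGGper} with $u_{k,\beta}$ together with the $O(\beta^p)$ smallness of $\cGP(u_{k,\beta})$ and $\<u^*,u_{k,\beta}\>$. The only real deviation is that you obtain strict monotonicity of $t\mapsto\cG_1(tv)+\cGP(tv)$ from the Lipschitz bound (G4) for $\beta$ small relative to $S$, whereas the paper gets it globally from (G5) via Lemma~\ref{l:propG1Per}; both suffice for the limit $\beta\to 0$.
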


\section{Preparation of the proofs}
\label{s:tools}

Before we carry out the proofs of our main results, we provide some tools from
nonsmooth critical point theory and some essential norm estimates.

\subsection{Tools from nonsmooth critical point theory}
\label{ss:nonsmooth}

Our existence results for eigensolutions rely on 
nonsmooth critical point theory for merely lower semicontinuous
functionals based on the weak slope. With Theorem \ref{t:exCP} below 
we provide a modified version of the general Ljus\-ter\-nik-Schni\-rel\-man
 type 
theorem stated in Degiovanni \& Schuricht
\cite[Thm.~2.5]{degiovanni-s:98}. Though several 
similar results can be found in the literature, we did not find a direct
reference for the presented version. Therefore we give a self-contained proof
for the convenience of the reader and to  
keep track for some technical details. 
For completeness we first introduce the notion of weak slope. 

Let $(M,d)$ be a metric space, let $\cF:M\to \R$ be a continuous
functional, and let $B_\delta(u)\subseteq M$ 
be the open ball of radius $\delta$ around
$u$. The weak slope $|d\cF|(u)\in[0,\infty]$ at the point $u\in M$ is 
the supremum over all $\sigma\in [0,\infty)$ such that there exists some 
$\delta>0$ and a continuous function 
$\eta: B_\delta (u)\times [0,\delta]\to M$ with  
\begin{align*}
d(\eta(v,t),v)\le t \quad\text{ and }\quad 
\cF(\eta(v,t))\le \cF(v)-\sigma t
\end{align*}
for all $(v,t)\in B_\delta(u)\times [0,\delta]$. This notion extends
the value of $\|\cF'(u)\|$ for a smooth function $\cF$ to merely continuous
functions on a metric space (cf.~\cite{degiovanni-m:94}). 
We thus call $u\in M$ a critical point
of $\cF$ in the sense of the weak slope provided $|d\cF|(u)=0$. 

In a consistent way we extend the weak slope to a lower semicontinuous
function $\cF:M\to \R$ by means of the epigraph 
\[ 
\epi(\cF):=\{(v,t)\in M\times \R\setsep \cF(v)\le t\}
\]
equipped with the metric 
\begin{align}\label{e:epim}
  \tilde d\big((v_1,t_1),(v_2,t_2)\big):=\sqrt{d(v_1,v_2)^2 + (t_1-t_2)^2}\,.
\end{align}
Using the projection $\mathscr{G}_{\cF}:\epi\,(\cF)\to \R$ given by
$\mathscr{G}_{\cF}(v,t):=t$, we define
\[
|d\cF|(u):=
\begin{cases}
\frac{|d\mathscr{G}_{\cF}|(u,\,\cF(u))}
     {\sqrt{1-\big(|d\mathscr{G}_{\cF}|(u,\cF(u))\big)^2}}&
     \text{if }|d\mathscr{G}_{\cF}|(u,\,\cF(u))<1\,, \\ 
\infty &\text{if }|d\mathscr{G}_{\cF}|(u,\cF(u))=1\,.
\end{cases}
\] 
This way the weak slope of $\cF$ is traced back to the weak slope of
the continuous function $\mathscr{G}_{\cF}:\epi\,(\cF)\to \R$. 

In order to rule out possible critical points $(u,t)$ of 
$\mathscr{G}_{\cF}$ with $t>\cF(u)$, we assume the so-called 
epigraph (or short (epi)-) condition, i.e. for each $b>0$ we assume to have 
\begin{align}
\inf\, \big\{|d\mathscr{G}_{\cF}|(u,t)\setsep (u,t)\in \epi(\cF),\: \cF(u)<t,\:
|t|<b\big\}>0\,.\label{e:epi} 
\end{align}
Now we are able to state the general critical point theorem 
for even and lower semicontinuous functionals where we use the genus as
topological index.

\begin{theo}\label{t:exCP}
Let $X$ be a real Banach space and let $\cF:X\to \R\cup\{\infty\}$ be such
that
\bgl
\item[\textup{(F1)}]\label{i:Flsc+...} 
$\cF$ is lower semicontinuous, 
even (i.e. $\cF(u)=\cF(-u)$), and $F(0)=\infty$, 

\item[\textup{(F2)}]\label{i:Fge0} $\cF$ is bounded from below,

\item[\textup{(F3)}]\label{i:FPS} $\cF$ satisfies the $\textup{(PS)}$-condition,

\item[\textup{(F4)}] \label{i:Fepi}$\cF$ satisfies the
  $\textup{(epi)}$-condition, and 

\item[\textup{(F5)}] \label{i:existgen>k} for all $k\in \N$ there exists  
$\Phi :\S^{k-1}\to X$ bijective, continuous and odd (i.e. $\Phi(-x)=-\Phi(x)$)
with  
\[\sup\{\cF(\Phi(x))\setsep x\in \S^{k-1}\}<\infty\,\]
where $\S^{k-1}$ denotes the $(k-1)$-dimensional sphere in $\R^k$.
\el
Then there exists a sequence of pairs $\pm u_1, \pm u_2,\ldots$ of critical
points of $\cF$ with corresponding critical values $c_k=\cF(\pm u_k)$,
$k\in\N$, given by 
\begin{align}
c_k=\inf_{S\in \cS_k} \sup_{v\in S}\cF(v)\label{e:c_kgeneral}
\end{align}
where
\[
\cS_k:=\{S\subseteq X\setminus\{0\} \text{ symmetric and compact} \sets \gen
S\ge k\} \,.
\]

If the sublevel sets $\{\cF\le \gamma\}$ are compact for any
$\gamma\in\R$, then 
\[ c_k\to \infty \quad\text{as}\quad k\to \infty\,,  \] 
there is a set $S\in \cS_k$ attaining the infimum in \eqref{e:c_kgeneral}
\begin{align}  \label{e:tools1}
c_k=\sup_{v\in S}\cF(v)\,,
\end{align}
and any $S\in \cS_k$ with \eqref{e:tools1}  
contains a critical point $\tilde u_k$ with critical value $c_k$.
\end{theo}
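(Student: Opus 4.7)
My strategy is to place the statement into the standard Ljusternik-Schnirelmann framework for lower semicontinuous functionals on metric spaces, working with the continuous projection $\sG_\cF$ on $\epi(\cF)$ and invoking the weak-slope deformation theory. I would first check that each class $\cS_k$ is nonempty and $c_k$ is finite. Given $\Phi$ from (F5), the set $K:=\Phi(\S^{k-1})$ is compact, symmetric and avoids $0$ by oddness plus injectivity of $\Phi$; any continuous odd map $K\to\R^m\setminus\{0\}$ would pull back via $\Phi$ to an odd continuous map $\S^{k-1}\to\R^m\setminus\{0\}$, which Borsuk-Ulam forbids for $m<k$, so $\gen_X K\ge k$ and $K\in\cS_k$. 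Combined with (F2) this yields $-\infty<\inf\cF\le c_k\le\sup_K\cF<\infty$, and the inclusion $\cS_{k+1}\subseteq\cS_k$ gives $c_k\le c_{k+1}$.

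The core step is a $\Z_2$-equivariant deformation at a regular level. Assume $c_k$ is not a critical value of $\cF$. By (PS) at level $c_k$ every putative Palais-Smale sequence is precompact, while the (epi)-condition excludes vertical critical points of $\sG_\cF$ in a strip around $c_k$; the standard weak-slope deformation theorem applied to $\sG_\cF$, then projected back to $X$ and symmetrized using evenness of $\cF$, produces $\epsilon>0$ and an odd continuous $\eta:X\to X$ with $\cF(\eta(u))\le\cF(u)$ and $\eta(\{\cF\le c_k+\epsilon\})\subseteq\{\cF\le c_k-\epsilon\}$. Taking $S\in\cS_k$ with $\sup_S\cF<c_k+\epsilon$, the image $\eta(S)$ is compact, symmetric, avoids $0$ (since $\cF(0)=\infty$), has $\gen_X\eta(S)\ge\gen_X S\ge k$ because $\eta$ is odd and continuous, and satisfies $\sup_{\eta(S)}\cF\le c_k-\epsilon$, contradicting the definition of $c_k$. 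Hence each $c_k$ is a critical value, and evenness of $\cF$ yields the pair $\pm u_k$.

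For the addendum I would first show $c_k\to\infty$ by contradiction: if $c_k\le\gamma$ for every $k$, then $A:=\{\cF\le\gamma+1\}$ is compact, symmetric and omits $0$, so $\gen_X A<\infty$ by the classical genus estimate for compact symmetric subsets of $X\setminus\{0\}$; any $S\in\cS_k$ with $\sup_S\cF<\gamma+1$ lies in $A$, forcing $k\le\gen_X A$ for all $k$, absurd. For attainment, pick $S_j\in\cS_k$ with $\sup_{S_j}\cF\to c_k$ so that eventually $S_j\subseteq\{\cF\le c_k+1\}$, which is compact; Blaschke selection extracts a Hausdorff limit $S$ that is compact and symmetric, lower semicontinuity of $\cF$ gives $\sup_S\cF\le c_k$, and the standard lower semicontinuity of genus under Hausdorff convergence (every open symmetric neighborhood of $S$ eventually contains $S_j$) yields $\gen_X S\ge k$, so $S$ attains. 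Finally, if an attaining $S$ contained no critical point at level $c_k$, the deformation $\eta$ from the previous paragraph would send $S$ to $\tilde S\in\cS_k$ with $\sup_{\tilde S}\cF<c_k$, contradicting attainment.

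The principal obstacle is the construction of the odd continuous deformation $\eta$ on $X$ from weak-slope data living on $\epi(\cF)$: this is precisely where both (PS) and the (epi)-condition are indispensable, and it is the step that requires the detailed technical bookkeeping alluded to in the theorem. Everything else---the genus estimates, Blaschke selection, and the passage between sublevels of $\cF$ and the epigraph---is then routine.
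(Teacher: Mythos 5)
Your overall route is the same as the paper's: nonemptiness of $\cS_k$ via (F5) and Borsuk--Ulam, passage to the epigraph and the weak-slope deformation machinery for $\sG_{\cF}$ with (PS) and the (epi)-condition ruling out critical points of $\sG_{\cF}$ near level $c_k$, an odd deformation contradicting the minimax definition, and Blaschke selection plus lower semicontinuity of the genus for the addendum. Your argument for $c_k\to\infty$ (all $S_k$ eventually lie in the compact symmetric set $A=\{\cF\le\gamma+1\}$, whose genus is finite) is even slightly more direct than the paper's, which runs Blaschke a second time. One presentational caveat: an odd continuous $\eta:X\to X$ with $\cF(\eta(u))\le\cF(u)$ cannot be obtained by composing the epigraph deformation with the lift $u\mapsto(u,\cF(u))$, since that lift is discontinuous for merely l.s.c.\ $\cF$; the paper's fix is to lift the whole set $S$ at the constant height $a=\sup_S\cF$ and project back, which you implicitly need but do not state.

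There is, however, one genuine gap: the final claim that an attaining $S$ contains a critical point. You propose to reuse ``the deformation $\eta$ from the previous paragraph,'' but that deformation was constructed under the hypothesis $K_{c_k}=\emptyset$, i.e.\ that the level $c_k$ carries no critical points at all. In the addendum $c_k$ \emph{is} a critical value (by the first part), so a deformation pushing all of $\{\cF\le c_k+\varepsilon\}$ below $c_k-\varepsilon$ cannot exist --- it would, together with (PS), contradict the presence of critical points at level $c_k$. What is needed instead is the deformation theorem relative to an open neighborhood $\cO$ of the critical set of $\sG_{\cF}$ (the paper invokes Theorem~2.14 of Corvellec--Degiovanni--Marzocchi), preceded by the verification that $S\times\{c_k\}$ can be separated from that critical set; this separation uses compactness of $S$, lower semicontinuity of the weak slope, and the (epi)-condition to exclude limit critical points $(v,t)$ with $t>\cF(v)$. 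Without this relative version the last step of your argument does not go through.
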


The proof of the theorem will be given in Section \ref{ss:proofCP} below. 

\pagebreak[3]

\begin{rem}
\begin{enumerate}
\renewcommand\labelenumi{\textup{(\theenumi)}}

\item With minor modifications we could use Theo\-rem~2.5 from
  \cite{degiovanni-s:98} to get the existence of a sequence
of critical points as in the previous theorem. 
However, the unboundedness of the $c_k$ and their minimax characterization 
as in \eqref{e:c_kgeneral} is not stated there.
The unboundedness of the $c_k$ could probably also be derived from 
Degiovanni \& Marzocchi \cite[Thm.~3.10]{degiovanni-m:94}, but there the category is
used as topological index. Though it is well known 
that the genus of a closed symmetric set equals its category in
 the projective space where antipodal points are identified
(cf.~Rabinowitz \cite[Thm.~3.7]{rabinowitz:73} and Fadell
\cite[p.~40]{fadell:80}),  
critical point theory for merely lower semicontinuous functionals $\cF$ is
reduced to the investigation of the continuous functional $\sG_{\cF}$ where 
some rather technical arguments are needed to verify that the critical values
obtained with the concept of category agree with that obtained by
using the genus (cf.~Littig \& Schuricht \cite[Cor.~2.2]{littig-s:13}
and its proof).

\item The situation of the theorem might be covered by the abstract
results of Corvellec \cite{corvellec:97}, 
but it is not immediate and might be quite technical 
to deduce the desired statements.

\item If condition \textup{(F5)} is satisfied not for all $k\in\N$ but
  only for some $k_0\in\N$ (e.g. if $X$ is finite dimensional),
  it is not difficult to adapt our proof to show
  that there exist at least $k_0$ pairs of critical points $\pm u_1,\ldots,\pm
  u_{k_0}$ with corresponding critical values given by
  \eqref{e:c_kgeneral}.

\item 
Notice that, in general, there might be critical points of $\cF$ with critical
level $c_k$ that do not
belong to some $S\in \cS_k$ satisfying \eqref{e:tools1}.
\end{enumerate}
\end{rem}

\subsection{Norm estimates}
\label{ss:normest}

Here we derive some norm estimates needed for our 
convergence results.

\begin{prop}\label{p:norm1}
Let $\Omega\subseteq\R^n$ be open and bounded, let 
$p\in[1,\frac{n}{n-1}]$, and let $C_{\textup{BV}}$ be the embedding constant of
$W^{1,1}_0(\Omega)$ in $L^{\frac{n}{n-1}}(\Omega)$ (cf.~\eqref{e:int8}). 
Then we have
\[
 \|u\|_p\le
 C_{\textup{BV}}^{\tfrac{n(p-1)}{p}}\,\|u\|_1^{1-\tfrac{n(p-1)}{p}}\,
 \|Du\|_1^{\tfrac{n(p-1)}{p}} \quad\text{for all } u\in W^{1,1}_0(\Omega)\,.
\]
\end{prop}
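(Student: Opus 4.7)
The plan is to obtain the inequality by interpolating the $L^p$ norm between $L^1$ and $L^{n/(n-1)}$ and then applying the Sobolev-type embedding \eqref{e:int8} on the $L^{n/(n-1)}$ factor. The exponent $\tfrac{n(p-1)}{p}$ on $\|Du\|_1$ should match exactly the interpolation weight of the endpoint $L^{n/(n-1)}$.

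First I would set $q = \tfrac{n}{n-1}$ and write $\tfrac{1}{p} = \tfrac{\theta}{1} + \tfrac{1-\theta}{q}$ for a suitable $\theta \in [0,1]$. Solving this linear equation gives $\theta = \tfrac{1/p - 1/q}{1 - 1/q}$; substituting $1/q = (n-1)/n$ and simplifying yields $\theta = 1 - \tfrac{n(p-1)}{p}$ and $1-\theta = \tfrac{n(p-1)}{p}$. Note that for $p \in [1, n/(n-1)]$ both numbers lie in $[0,1]$, so the interpolation is admissible. Then Lyapunov's inequality (a direct consequence of Hölder applied to $|u|^p = |u|^{\theta p} \cdot |u|^{(1-\theta)p}$ with conjugate exponents $\tfrac{1}{\theta p}$ and $\tfrac{1}{(1-\theta)p}$) gives
\[
\|u\|_p \;\le\; \|u\|_1^{\theta}\,\|u\|_q^{1-\theta} \;=\; \|u\|_1^{1-\frac{n(p-1)}{p}}\,\|u\|_{n/(n-1)}^{\frac{n(p-1)}{p}}.
\]

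Next I would insert the Sobolev embedding \eqref{e:int8}, namely $\|u\|_{n/(n-1)} \le C_{\textup{BV}}\|Du\|_1$, into the second factor. Raising to the power $\tfrac{n(p-1)}{p}$ produces the claimed constant $C_{\textup{BV}}^{n(p-1)/p}$ and the factor $\|Du\|_1^{n(p-1)/p}$, completing the proof. Finally I would remark that the endpoint cases are consistent: at $p=1$ the exponent $n(p-1)/p$ vanishes and the inequality becomes the trivial $\|u\|_1 \le \|u\|_1$, while at $p = n/(n-1)$ it reduces exactly to \eqref{e:int8}.

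There is no real obstacle here; the only point requiring attention is the bookkeeping of the interpolation exponent $\theta$ and the verification that $\theta \in [0,1]$ precisely for $p \in [1, n/(n-1)]$, which is the admissible range stated in the proposition.
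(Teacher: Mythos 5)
Your proof is correct and follows exactly the paper's argument: the interpolation (Lyapunov) inequality $\|u\|_p\le\|u\|_1^{\theta}\|u\|_{n/(n-1)}^{1-\theta}$ with $\theta=1-\tfrac{n(p-1)}{p}$, followed by the embedding \eqref{e:int8}. The additional checks on the admissibility of $\theta$ and the endpoint cases are fine but not needed beyond what the paper records.
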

\begin{proof}
For $u\in L^{\frac{n}{n-1}}(\Omega)$ the interpolation inequality tells us that
\[
\|u\|_p\le\|u\|_1^{\theta}\|u\|_{\frac{n}{n-1}}^{1-\theta}
\]
with
\[
 \frac{1}{p} = \frac{\theta}{1} + \frac{1-\theta}{n/(n-1)}
 \quad\text{or, equivalently,}\quad \theta= 1-\frac{n(p-1)}{p} \,.
\]
Then the assertion directly follows with \eqref{e:int8}.
\end{proof}

\pagebreak[3]

Consequently we can control $\|u\|_p$ by joint knowledge of $\|u\|_1$ and
$\|Du\|_1$. Since $\|Du\|_1=\cE_{TV}(u)$ for $u\in W^{1,1}_0(\Omega)$, the
following statement for $BV$-functions is not surprising. 

\begin{cor}\label{cor:||pp}
 Let $\Omega\subseteq\R^n$ be open and bounded with Lipschitz boundary, let
 $p\in[1,\frac{n}{n-1}]$,
let $C_{\textup{BV}}$ be the embedding constant of
$W^{1,1}_0(\Omega)$ in $L^{\frac{n}{n-1}}(\Omega)$ (cf.~\eqref{e:int8}), and let
$\cE_{TV}$ be as in \eqref{e:defETV}. Then
\[
 \|u\|^p_p\le
 C_{\textup{BV}}^{(p-1)n}\,\|u\|_1^{n-(n-1)p}\,\cE_{TV}(u)^{(p-1)n}
 \quad\text{for all } u\in BV(\Omega)\,.
\]
If additionally $p\le \frac{n+1}{n}$, we have
\begin{align}
\|u\|^p_p\le  C_{\textup{BV}}^{(p-1)n}\,\|u\|_1^{n-(n-1)p}\,\big(\cE_{TV}(u)
+1\big) \quad\text{for all } u\in BV(\Omega)\,. \label{e:ppbound} 
\end{align}
\end{cor}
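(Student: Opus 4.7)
The approach is to raise Proposition~\ref{p:norm1} to the $p$-th power on $W^{1,1}_0(\Omega)$ and then extend the resulting estimate to $BV(\Omega)$ by approximation; the second claim will then follow from the elementary inequality $t^{\alpha}\le 1+t$ valid for $\alpha\in[0,1]$ and $t\ge 0$.

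For $v\in W^{1,1}_0(\Omega)$, Proposition~\ref{p:norm1} raised to the $p$-th power, combined with the identity $p-n(p-1)=n-(n-1)p$ and the fact that $\|Dv\|_1=\cE_{TV}(v)$, directly gives the first inequality on $W^{1,1}_0(\Omega)$. For an arbitrary $u\in BV(\Omega)$, I will invoke the density/relaxation fact recalled in Section~\ref{s:formulation}: since $\cE_{TV}$ is the $L^1$-lower semicontinuous extension of $\|D\cdot\|_1$ from $W^{1,1}_0(\Omega)$ (cf.~\cite{kawohl-s:07}, \cite{littig-s:13}), there exists a sequence $(u_j)\subseteq W^{1,1}_0(\Omega)$ with $u_j\to u$ in $L^1(\Omega)$ and $\|Du_j\|_1\to\cE_{TV}(u)$. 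Passing to a subsequence so that $u_j\to u$ almost everywhere and applying Fatou's lemma to $|u_j|^p$ on the left-hand side, I pass to the limit in the $W^{1,1}_0$-inequality applied to each $u_j$, using $\|u_j\|_1\to\|u\|_1$ and $\|Du_j\|_1\to\cE_{TV}(u)$ on the right, to obtain the first claim for $u$.

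For the second inequality, the hypothesis $p\le\frac{n+1}{n}$ is equivalent to $(p-1)n\le 1$. For every $\alpha\in[0,1]$ and $t\ge 0$ one has $t^{\alpha}\le 1+t$ (trivial for $t\le 1$; for $t\ge 1$ use $t^{\alpha}\le t$). Applying this with $\alpha=(p-1)n$ and $t=\cE_{TV}(u)$ in the first estimate yields \eqref{e:ppbound}.

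The only mildly delicate point is the approximation step at the endpoint $p=\frac{n}{n-1}$, where convergence of $u_j$ to $u$ in $L^p$ need not hold; however, Fatou's lemma (equivalently, the a.e.-lower semicontinuity of $\|\cdot\|_p^p$) handles this cleanly, and moreover at the endpoint the exponent $n-(n-1)p$ vanishes, so no issue arises on the right-hand side either.
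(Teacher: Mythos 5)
Your proof is correct and follows essentially the same route as the paper: raise Proposition~\ref{p:norm1} to the $p$-th power, extend to $BV(\Omega)$ by the strict approximation of $\cE_{TV}$ from $W^{1,1}_0(\Omega)$ (Theorem~3.1 of \cite{littig-s:13}), and use $t^{(p-1)n}\le 1+t$ for the second estimate. Your explicit handling of the left-hand side via Fatou's lemma, and the remark that the exponent $n-(n-1)p$ vanishes at the endpoint $p=\tfrac{n}{n-1}$, merely spell out details the paper leaves implicit.
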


\begin{proof}
The fist estimate follows by taking the $p$-th power of the inequality in 
Proposition~\ref{p:norm1} and by approximating $\cE_{TV}(u)$ as in 
Theo\-rem~3.1 of\cite{littig-s:13}. 
For the second estimate we observe that 
$t^{(p-1)n}\le 1+t$ for $t\ge 0$ by $(p-1)n\le 1$ and then we set 
$t=\cE_{TV}(u)$. 
\end{proof}
Notice that \eqref{e:ppbound} allows to control the $p$-th order growth of
$\|u\|_p^p$ by the first order growth of $\cE_{TV}(u)$ provided $\|u\|_1$ is
known to be bounded.

\section{Proofs of the main results}
\label{s:proofs}

We first present the proof of Theorem \ref{t:cF} about  
properties of integral functionals we have in mind as perturbations. 
Then the general Theorem \ref{t:exCP} about existence of critical points
is verified. In Section \ref{ss:proofpertenergy} proofs related to 
perturbations of the energy are given and, finally, Section
\ref{ss:proofPertconstr} collects the proofs related to 
perturbations of the constraint.

\subsection{Proof of Theorem~\ref{t:cF}}
\label{ss:proofcF}

\begin{proof}[Proof of Theorem~\ref{t:cF}] 
If $\cF$ is well-defined, then
antisymmetry of $f(x,\cdot)$ as in \eqref{e:antsymf} implies 
that $\cF$ is symmetric, i.e.\ $\cF(u)=\cF(-u)$, and we have 
\begin{align}\label{e:proofs1}
\cF(u)=\int_\Omega\int_0^{u(x)}f(x,s)\d s \d x = \int_\Omega
\int_0^{|u(x)|}f(x,s)\d s\d x=\cF(|u|)\,. 
\end{align}

Let us now verify that $\cF$ is well-defined. 
By (f1) function $F:\Omega\times[0,\infty)\to\R$ with 
\[ F(x,t):=\int_0^tf(x,s)\d s \]
is a Carath\'eodory function (which includes that $F$ is well defined)
and, hence, $F(\cdot,u(\cdot))$ is measurable on $\Omega$ for any 
measurable $u$.

We now take $u, w\in L^p(\Omega)$ with $\|u\|_p,\,\|w\|_p\le R$. Then 
$(|u|+|w|)^{p-1}\in L^{p'}(\Omega)$ with 
\[
 \|(|u|+|w|)^{p-1}\|_{p'}^{p'} = \|(|u|+|w|)\|_p^p\le (2R)^p.
\]
By \eqref{e:proofs1}, \eqref{e:fleCsp-1}, and H\"older's inequality we get
\begin{align}
|\cF(u)-\cF(w)|& = \left|\int_\Omega \int_{|w(x)|}^{|u(x)|} f(x,s) \d s \d
  x\right|\notag\\ 
&\le p\,C_{\Per}\int_\Omega\big||u(x)|-|w(x)|\big|\,
\big(|u(x)|+|w(x)|\big)^{p-1}\d x\notag\\
& \le p\,C_{\Per}\|u-w\|_p\, \|(|u|+|w|)^{p-1}\|_{p'}\notag\\
&\le p\,C_{\Per} (2R)^{p-1}\|u-w\|_p\,.\label{e:Lip_est}
\end{align}
Since $\cF(w)=0$ for $w=0$, we readily obtain that $\cF(u)$ is finite for
all $u\in L^p(\Omega)$. Moreover $\cF$ is uniformly Lipschitz continuous on
bounded subsets of $L^p(\Omega)$.  

A straightforward calculation using \eqref{e:fleCsp-1} gives
for $\alpha\ge 0$ that
\begin{align*}
 |\cF(\alpha u)|&\le \int_\Omega \left|\int_0^{\alpha u(x)} f(x,s) \!\d
   s\right|\d x\\ 
&\le \int_\Omega\left| \int_0^{|\alpha u(x)|} p\,C_{\Per}s^{p-1} \d
  s\right|\!\d x\displaybreak[3]\\ 
&= \int_\Omega C_{\Per}|\alpha u|^p\d x\\
&= C_{\Per}\,\alpha ^p \|u\|_p^p\,,
\end{align*}
i.e. we have shown \eqref{e:cFleC|u|_p^p}.

It remains to prove assertion (4) about $\partial\cF(u)$.
For $u,v\in L^p(\Omega)$, $u^*\in\partial \cF(u)$, and with the notation
\begin{align}
F_x(t):=F(x,t)\,,\label{e:Fx=F(x,.)}
\end{align}
we derive
\begin{align*}
\int_\Omega u^* v \d x \le \cF^0(u;\, v)&=\limsup_{w\to u,\,t\downarrow0}\:
\frac{\cF(w+t v)-\cF(w)}{t}\\&=\limsup_{w\to u,\,t \downarrow0}\:
\int_\Omega\frac{F_x(w(x)+t v(x))-F_x(w(x))}{t}\d x\,. 
\end{align*}
Notice that $F_x$ is the primitive of a locally bounded function
for a.e.~$x\in \Omega$ by~\eqref{e:fleCsp-1}. Hence 
we are in the situation of Example~2.2.5 from \cite{clarke:87} and obtain that
$F_x$ is locally Lipschitz continuous with 
\begin{align}\label{e:proofs2}
\partial F_x(t)=\Big[\essinf_{s\to t}f(x,s),\,\esssup_{s\to t}
f(x,s)\Big] \quad\text{for }t\in\R\,.
\end{align}
Again by \eqref{e:fleCsp-1} we get
\begin{align}
|F^*|\le p\,C_{\Per}|t|^{p-1} \quad\text{for all }
F^*\in \partial F_x(t)\,. \label{e:partialFx}
\end{align}

Let us now choose a sequence $(w_k)_k$ with $w_k\to u$ in $L^p(\Omega)$ and 
$t_k\downarrow 0$ with $t_k\le 1$ such that
\begin{align}
\cF^0(u;\, v)=\lim_{k\to \infty}
\int\frac{F_x(w_k(x)+t_kv(x))-F_x(w_k(x))}{t_k}\d x\,.\label{e:cF0=lim} 
\end{align}
Without loss of generality we may assume that $w_k(x)\to u(x)$ a.e. on
$\Omega$. By Lebourg's Theorem (cf.~\cite[Thm.~2.3.7]{clarke:87})
we have that for a.e. $x\in\Omega$ and every $k\in \N$ there is some
$\theta\in (0,1)$ and $F_k^*(x)\in \partial F_x\big(w_k(x)+\theta
t_kv(x)\big)$ such that 
\begin{align}
\big|F_x\big(w_k(x) + t_k v(x)\big)-
F_x\big(w_k(x)\big)\big|&=\big|F_k^*(x)\big(w_k(x) + t_k v(x)
-w_k(x)\big)\big|\label{e:Fw_kt_k}\\ 
&=t_k\big|F_k^*(x) v(x)\big|\notag\\
&\le p\, C_{\Per}(|w_k(x)|+|v(x)|)^{p-1}\,|v(x)|\,\notag
\end{align}
by \eqref{e:partialFx}. Obviously 
\[ h_k:=|w_k|+|v|\to |u|+|v| \quad\text{in }L^p(\Omega)\,\]
and the nonlinear operator $J_p: L^p(\Omega)\to L^{p'}(\Omega)$
given by 
\[ J_p(u)(x):=|u(x)|^{p-1}\sgn(u(x)) \]
is a homeomorphism (cf.~\cite[p.~72]{cioranescu:90}). 
Thus 
\[
h_k^*:=J_p(h_k) \to J_p(|u|+|v|) \quad \text{in }L^{p'}(\Omega) \,.
\]
Whence 
\[ g_k^{\phantom{*}}:=h_k^*\,|v| \to g:=(|u|+|v|)^{p-1}|v| \quad\text{in }
L^1(\Omega) 
\]
and, by assumption, also pointwise a.e. on $\Omega$. 
Picking an appropriate subsequence if necessary we may assume that
$\sum_{k\in\N}\|g_k-g\|_1<\infty$. Then  
$g+\sum_{k\in\N}|g_k|$ is a majorant of all $g_k$ and also of
all integrands in \eqref{e:Fw_kt_k}. Therefore, by Fatou's Lemma, 
\eqref{e:cF0=lim} implies 
\[
\int_\Omega u^*(x) v(x)\d x\le 
\cF^0(u;\, v)\le \int_\Omega \limsup_{k\to \infty}
\frac{F_x(w_k(x)+t_kv(x))-F_x(w_k(x))}{t_k}\d x\,. 
\]
Note that the integrand on the right hand side is
bounded by $F_x^0(u(x);\, v(x))$ for a.e. $x\in \Omega$. Since the argument
holds true for all $v\in L^p(\Omega)$, we can choose  
$v=t\chi_E$ for appropriate $E\subseteq\Omega$ and $t\in\R$ to obtain 
\[
u^*(x)\,t\le F^0_x(u(x);\, t) 
\quad\text{for all $t\in\R$ and a.e. $x\in\Omega$\,.}
\]
Consequently, by definition,
\[
u^*(x)\in \partial F_x(u(x)) \quad\text{for a.e. } x\in\Omega\,
\]
and with \eqref{e:proofs2} we have verified \eqref{e:u*f-f+}. 
By \eqref{e:fleCsp-1} we thus obtain
\begin{align*}
\|u^*\|_{p'}^{p'}&\le \int_\Omega 
\max\bigg\{\Big|\essinf_{s\to u(x)}f(x,s)\Big|, 
\Big|\esssup_{s\to u(x)}f(x,s)\Big|\bigg\}^{p'}\d x \\
&\le \int_\Omega (p\,C_{\Per})^{p'} |u(x)|^p\d x\\
&= (p\,C_{\Per})^{p'} \|u\|_p^p
\end{align*}
and \eqref{e:|u*|p'} follows.
\end{proof}

\subsection{Proof of Theorem~\ref{t:exCP}}
\label{ss:proofCP}

\begin{proof}[Proof of Theorem~\ref{t:exCP}]It is well known that
(F5) ensures the classes $\cS_k$ to be nonempty
  (cf.~\cite[Chap.~44.3]{zeidler:84}) and $c_k<\infty$. Thus, 
by boundedness of $\cF$ from
below, the values $c_k$ in \eqref{e:c_kgeneral} are finite. 

If a set $\ti S\subseteq \epi(\cF)$ has the property that $(u,s)\in \ti S$ implies $(-u,s)\in \ti S$, we define the genus $\gen_1 \ti S$ of $\ti S$
as the genus of the projection of $\ti S$ on the first coordinate, i.\,e. 
\[
\gen_1\ti S:=\gen \{u\in X\setsep (u,s)\in \ti S\}\,.
\]
Taking
\[
\ti \cS_k:=\{\ti S \subseteq \epi(\cF)\sets \ti S\text{ compact}\,, \  
\gen_1 \ti S \ge k\,, \ (-u,s)\in \ti S \;\; \forall (u,s)\in \ti S \} \,,
\]
\[
\ti c_k:= \inf_{\ti S\in \ti\cS_k}\sup_{(u,s)\in \ti S}\sG_{\cF}(u,s)
\]
we have
\[
c_k=\ti c_k\,.
\]
Indeed, invoking the definition of $\sG_{\cF}$ and $\ti S\subseteq \epi(\cF)$,
we see that the value $\ti c_k$ does not change if we 
restrict our attention to sets $\ti S\in \ti \cS_k$
of the form 
\[
\ti S= S\times\big\{\sup_{u\in S}\cF(u)\big\}
\]
with $S\in \cS_k$. We may assume that $\sup_{u\in S}\cF(u)<\infty$
by (F5) and, hence, for those sets $\ti S$ the equality is
immediate. 

We define the set of critical points of $\cF$ at level $c$ by
\[
K_c:=\{u\in X\sets \cF(u)=c\text{ and } |d\cF|(u)=0\}\,.
\]
Let us assume that $c_k$ is not a critical value, i.e.~$K_{c_k}=\emptyset$.
We will show that then there is some $\ti \varepsilon>0$ such that
\begin{align}\label{e:proofs3}
  K_c=\emptyset \quad \text{ for all } \quad 
  c\in (c_k-\ti \varepsilon, c_k + \ti\varepsilon)\,.
\end{align}
If this is not true, we find a sequence of critical points $(u_j)_j$ of the function $\cF$
with $\cF(u_j)\to c_k$. Then, by definition, $(u_j, \cF(u_j))_j$ is a sequence
of critical points of the continuous function
$\sG_{\cF}: \epi(\cF)\to \R$. Since $(u_j)_j$ is a
Pa\-lais-Smale sequence for $\cF$, it admits a convergent subsequence (denoted
the same way) with $u_j\to:u$. By lower
semicontinuity of $\cF$ we have $\cF(u)\le c_k$. 
Since the weak slope is lower
semicontinous with respect to the graph metric (see
\cite[Prop.~2.6]{degiovanni-m:94}), we obtain that $(u, c_k)=\lim_{j\to
  \infty} (u_j, \cF(u_j))$ is a critical point of $\sG_{\cF}$. From the
(epi)-con\-di\-tion \eqref{e:epi} we derive that $c_k=\cF(u)$ and, therefore,
$u \in K_{c_k}$. But this is a contradiction and verifies \eqref{e:proofs3}.

According to the first part of the proof of Theo\-rem~2.5 in
\cite{degiovanni-s:98} (applied with $f=\sG_{\cF}$, $X=\epi(\cF)$,
$\Phi(u,s)=(-u,s)$, $\mathcal{O}=\emptyset$) there is some 
$\varepsilon\in(0,\ti\varepsilon]$ and a continuous map $\eta :
\epi(\cF)\times [0,1]\to \epi(\cF)$ such that for all $(u,s)\in\epi(\cF)$, 
all $t\in[0,1]$, and with the epigraph metric $\tilde d$ as in \eqref{e:epim} 
\begin{align}
\tilde d\big(\eta((u,s),t), (u,s)\big)&\le t\notag\\
s\not \in [c_k-\ti \varepsilon, c_k + \ti \varepsilon]\ &\Rightarrow \ \eta \big((u,s),t\big)=(u,s)\notag\\
\eta\big(\big\{\sG_{\cF}\le c_k + \varepsilon\big\}, 1\big) &\subseteq \big\{\sG_{\cF}\le c_k -\varepsilon\big\}\label{e:eta1}\\
\eta\big((-u,s) ,t\big) &= -\eta ((u,s), t)\,.\label{e:eta2}
\end{align}

By \eqref{e:c_kgeneral} there is $S_1\in \cS_k$ such that
\[
\sup_{u\in S_1}\cF(u)\le c_k + \varepsilon\,.
\]
For $a:=\sup\nolimits_{u\in S_1}\cF(u)$ we define $\eta_1: X\to X\times \{a\}$ by
\[
\eta_1(u):=(u,a)\quad \text{and}\quad 
T_1:=\eta_1(S_1)=S_1\times\{a\}\subseteq\epi(\cF)\,.
\]
With $\eta$ from above we consider
\[
T_2:=\eta(T_1,1)\subseteq \epi(\cF)\,.
\]
By \eqref{e:eta1} we have $s\le c_k-\varepsilon$ for all $(u,s)\in T_2$.
Let $\eta_2: \epi(\cF)\to X$ denote the projection given by
\begin{align}\label{e:proofs4b}
\eta_2(u,s):= u\quad\text{and with}\quad S_2:=\eta_2(T_2)\,,
\end{align}
we then obtain
\begin{align}   \label{e:proofs4}
\sup_{u\in S_2}\cF(u) \le \sup_{(u,s)\in T_2} s \le c_k-\varepsilon\,.
\end{align}
The set $S_2$ is obtained as continuous image of $S_1$ under
$\eta_2\circ\eta\circ\eta_1$ and thus compact.
By \eqref{e:eta2} we see that 
$\eta_2\circ\eta\circ\eta_1$ is odd and, thus, 
an elementary property of genus gives 
\[
\gen S_2\ge \gen S_1 \,.
\]
Consequently, $S_2\in \cS_k$ and \eqref{e:proofs4} contradicts 
the definition \eqref{e:eta1} of $c_k$. Hence our assumption 
$K_{c_k}=\emptyset$ must be wrong and $c_k$ has to be a critical level for any
$k\in\N$.

For the proof of the remaining assertions let 
$\{\cF\le\gamma\}$ be compact for any $\gamma\in\R$.
Here we also use a compactness result of Blaschke  
(cf.~\cite[Thm.~4.4.15]{ambrosio-t:04}) saying that the set $\cK$ 
of nonempty compact subsets of a compact metric space $(K,d)$ 
is compact provided $\cK$ is equipped with the Hausdorff distance  
\[
  d_H(K_1,K_2):= \sup_{x\in K_1} d(x,K_2) + \sup_{x\in K_2} d(x, K_1)\,.
\]
Moreover, if $K_j\to K_0$ in the Hausdorff distance, then $x_0\in K_0$ 
if and only
if for each $j\in \N$ there is some $x_j\in K_j$ such that $x_j\to x_0$ 
(cf.~\cite[Prop.~4.4.14]{ambrosio-t:04}).

First we fix $k\in\N$ and choose a sequence $(S_j)$ in $\cS_k$ with 
\[
c_k=\lim_{j\to \infty} \sup_{v\in S_j}\cF(v)\,.
\]
We can assume that all $S_j$ belong to the compact
set $\{\cF\le c_k+1\}$ and, by Blaschke's theorem, that they converge 
to some compact 
$S\subseteq \{\cF\le c_k+1\}$ with respect to the Hausdorff metric.
The pointwise characterization of the limit and the 
lower semicontinuity of $\cF$ imply that $S$ is symmetric, that 
$0\not\in S$ (recall $\cF(0) = \infty$), and that
\[
\sup_{v\in S}\cF(v)\le \lim_{j\to \infty} \sup_{v\in S_j}\cF(v)=c_k\,.
\]
By a standard property of genus there is an open neighborhood 
$U$ of $S$ with $\gen S = \gen \overline{U}$
(cf.~\cite[Chap.~44.3]{zeidler:84}). The convergence $S_j\to S$ in the
Hausdorff metric implies $S_j\subseteq U$ for $k$ large enough. 
Hence, the monotonicity of genus with respect to inclusions gives
\[
  \gen S \ge \limsup_{j\to\infty} \gen S_j \ge k\,.
\]
Therefore $S\in \cS_k$ and the definition of $c_k$ implies \eqref{e:tools1}.

For fixed $k$ we now choose any $S\in \cS_k$ satisfying 
$c_k=\sup_{v\in S}\cF(v)$ and let us assume that 
\begin{align}   \label{e:proofs4a}
  S\cap K_{c_k}=\emptyset\,.
\end{align}
We show that there exists a neighborhood $\cU$ of $S\times\{c_k\}$ in 
$\epi(\cF)$ containing no critical points of $\sG_{\cF}$. Otherwise
we find critical points $(v_j,t_j)$ of $\sG_{\cF}$ with $(v_j,t_j)\to (v,c_k)$
for some $v\in S$ (recall compactness of $S$ and thus $S\times \{c_k\}$). Since the weak slope is
lower semicontinuous, $(v,c_k)$ is a critical point of $\sG_{\cF}$ and,
by the (epi)-con\-di\-tion (F4), $v\in S$ is critical point of $\cF$ with
critical value $c_k$. But this contradicts \eqref{e:proofs4a} and verifies our
claim. Consequently, by the compactness of $S\times \{c_k\}$, there is some open neighborhood 
$\cO$ of the critical points of $\sG_\cF$ in $\epi(\cF)$ with 
\[
\cO\cap (S\times \{c_k\})=\emptyset\,.
\]
According to Deformation Theorem 2.14 in \cite{corvellec-dm:93},
applied at critical value $c_k$, there exists a continuous map $\varphi:
\epi(\cF)\times [0,1]\to \epi(\cF)$ and some $\varepsilon>0$ such that 
\[
\sG_{\cF}(\varphi((u,s),1))\le c_k-\varepsilon
\quad\text{for } (u,s)\in \{\epi(\cF)\setminus\cO \sets
\sG_{\cF}(u,s)\le c_k+\varepsilon \}
\]
An easy adaption of the proof of \cite[Thm.~2.17]{corvellec-dm:93} shows
that we can assume
\[
\varphi((u,s),t)=-\varphi((-u,s),t) \quad \text{for }
(u,s)\in\epi(\cF),\ t\in[0,1]\,.
\]
With $\eta_2$ from \eqref{e:proofs4b} we get that
\[
\ti S:=\eta_2(\varphi(S\times\{c_k\},1))
\]
is symmetric and, as continuous image of a compact set, compact. Moreover
\begin{align}\label{e:proofs4c}
\cF(u)\le c_k-\varepsilon \qquad \text{for }u\in \ti S\,.
\end{align}
We have $\gen \ti S\ge \gen S\ge k$, since a continuous map does not decrease 
the genus. Thus $\ti S\in \cS_k$ and \eqref{e:proofs4c} contradicts the
definition of $c_k$. Consequently \eqref{e:proofs4a} must be wrong and $S$
contains a critical point $\ti u_k$ with critical value $c_k$.  

Finally let us assume that 
\[
c:=\limsup_{k\to\infty}c_k<\infty\,.
\]
According to \eqref{e:tools1} we can choose 
$S_k\in \cS_k$ with $c_k=\sup_{v\in S_k}\cF(v)$. Since the $c_k$ are
increasing, we can assume that all $S_k$ 
belong to the compact set $\{\cF\le c\}$ and, by Blaschke’s Theorem, 
that the $S_k$
converge to some compact and symmetric set $\ti S\subseteq\{\cF\le c\}$ 
in the Hausdorff metric. In particular $0\not\in\ti S$ by (F1). 
As above there is an open neighborhood $V$ of $\ti S$ with 
$\gen \overline{V} =\gen\ti S$. 
Since $S_k\subseteq V$ for $k$ large enough, the monotonicity of genus 
with respect to inclusions implies 
\[
k\le \gen S_k\le \gen \overline{V}= \gen \ti S \quad\text{for all }
k\in\N\,.
\]
But this contradicts the fact that the genus of a compact set is finite. 
Therefore $(c_k)_k$ cannot be bounded and the proof is complete. 
\end{proof}

\subsection{Proofs for perturbations of the energy}
\label{ss:proofpertenergy}

\begin{proof}[Proof of Theorem~\ref{t:ELG}]
We will apply \cite[Cor.~3.7]{degiovanni-s:98}~ with 
\[ Y=L^p(\Omega)\,,\; f_0=\cE_{TV}\,,\; f_1=\cE_{\Per}\,,\; 
g_0=-1\,,\; g_1=\cG_1-\alpha\,.
\]
 Let $u\in
BV(\Omega)$ with $\cG_1(u)=\alpha$. In order to prove the (epi)-con\-di\-tion
(cf.~\cite[Thm.~3.4]{degiovanni-s:98}) we need to show that there are
$u_1$, $u_2\in BV(\Omega))$ such that 
\[
\cG_1^0(u;\, u_1 -u) < 0\quad\text{and}\quad \cG_1^0(u;\, u-u_2)<0\,.
\]
Recalling the generalized gradient of $\cG_1$
(cf.~\cite[Prop.~4.23]{kawohl-s:07}) and using
\cite[Prop.~2.1.2]{clarke:87} with $u_1=0$ and $u_2=2u$, 
we derive 
\begin{align}
\cG_1^0(u;\, u_1-u) & =\cG_1^0(u;\, u - u_2)=\cG_1^0(u;\,-u) \notag\\
& =\max_{u^*\in\partial\cG_1(u)}\<u^*,-u\>=-\alpha<0.\label{e:cG0} 
\end{align}
The Eu\-ler-La\-grange equation \eqref{e:ELGEper} is now a consequence of
\cite[Prop.~4.23]{kawohl-s:07} and, for 
\eqref{e:results1a},
we use the properties of $\partial\cE_{\Per}$ stated in 
Theorem~\ref{t:cF}.
\end{proof}

\medskip

\begin{proof}[Proof of Proposition \ref{p:PS-Eper}]
Let $c\in\R$ and let $(u_j)_j$ be a Palais-Smale sequence for the function $\cE$, 
i.e. $\cE(u_j)\to c$ and $|d\cE|(u_j)\to 0$. 
In the case (E5') where $\cE_{\Per}$ is bounded from below by some $L\le 0$,
we eventually have 
\[
 \cE_{TV}(u_j)\le c + 1 - \cE_{\Per}(u_j)\le c+1-L\,.
\]
Since $\cE_{TV}$ is a norm on $BV(\Omega)$ equivalent to the standard norm, 
$(u_j)_j$ is bounded in $BV(\Omega)$. Thus, the compact embedding 
$BV(\Omega)\hookrightarrow L^p(\Omega)$ ensures the existence of a convergent
subsequence in $L^p$ and the (PS)-con\-di\-tion is verified.

If condition (E5'') is satisfied, we use \eqref{e:ppbound} and $\|u_j\|_1=\alpha$ to estimate
\begin{align}
c + 1&\ge  \cE_{TV}(u_j) + \cE_{\Per}(u_j)\notag\\
&\ge \cE_{TV}(u_j) - C_{\Per}\|u_j\|_p^p\notag\\
&\ge \cE_{TV}(u_j) - C_{\Per}\big(C_{\textup{BV}}^{(p-1)n}\,\|u_j\|_1^{n-(n-1)p}\,\cE_{TV}(u_j) + C_{\textup{BV}}^{(p-1)n}\,\|u_j\|_1^{n-(n-1)p}\big)\notag\\
&=\cE_{TV}(u_j) - C_{\Per} C_{\textup{BV}}^{(p-1)n}\,\alpha^{n-(n-1)p}\,\cE_{TV}(u_j) -C_{\Per}C_{\textup{BV}}^{(p-1)n}\,\alpha^{n-(n-1)p}\notag\\
&=\cE_{TV}(u_j)\big(1-  C_{\Per} C_{\textup{BV}}^{(p-1)n}\,\alpha^{n-(n-1)p}\big) - C_{\Per}C_{\textup{BV}}^{(p-1)n}\,\alpha^{n-(n-1)p}\,.\label{e:||pp_estim}
\end{align}
By \eqref{e:alph<<1} we obtain
\[
 \cE_{TV}(u_j)\le \frac{c + 1 +
   C_{\Per}C_{\textup{BV}}^{(p-1)n}\,\alpha^{n-(n-1)p}}{1-C_{\Per}
   C_{\textup{BV}}^{(p-1)n}\,\alpha^{n-(n-1)p}}\,. 
\]
Whence, as above, $(u_j)_j$ is bounded in $BV(\Omega)$ and 
there is a convergent subsequence in $L^p(\Omega)$. 
\end{proof}

\medskip

\begin{proof}[Proof of Theorem \ref{t:exist:CP1perLap}]
We will apply Theo\-rem~\ref{t:exCP} to 
\[ \cF=\cE_{TV}+\cE_{\Per} + I_{\{\cG_1=\alpha\}}\,. \]
Obviously, (F1) is satisfied. 
(F2) is clearly satisfied in the case (E5') where $\cE_{\Per}$ is
bounded from below. In the case (E5'') we have $p\le 1 +\frac{1}{n}$ and,
similar  
to \eqref{e:||pp_estim}, we use (E3), \eqref{e:ppbound}, and
\eqref{e:alph<<1} to derive for $v\in
BV(\Omega)$ with $\cG_1(v)=\alpha$ that 
\begin{align}
\cF(v) &= \cE_{TV}(v) + \cE_{\Per}(v) \nonumber\\
&\ge \cE_{TV}(v) - C_{\Per}\|v\|_p^p \nonumber\\
&\ge \big(1-C_{\Per}C_{\textup{BV}}^{(p-1)n}\alpha^{n-(n-1)p}\big)\,\cE_{TV}(v)
- C_{\Per}C_{\textup{BV}}^{(p-1)n}\alpha^{n-(n-1)p}\label{e:proofs5}\\
&\ge - C_{\Per}C_{\textup{BV}}^{(p-1)n}\alpha^{n-(n-1)p}\,. \nonumber
\end{align}
Hence $\cF$ is bounded from below and we have (F2) also in the second case. 
The function $\cF$ satisfies the (PS)-con\-di\-tion by Pro\-po\-si\-tion~\ref{p:PS-Eper}.
The (epi)-con\-di\-tion follows from \eqref{e:cG0}
(cf.~\cite[Thm.~3.4]{degiovanni-s:98}). 
In order to verify (F5) 
we choose linearly independent $v_1,\ldots v_k\in \Ccinfty$ 
and a desired map $\Phi:\S^{k-1}\to L^p(\Omega)$ is obviously given by 
\[
\Phi(x)=\Phi(x_1,\ldots, x_k) = \frac{\alpha\sum_{j=1}^n x_j
  v_j}{\big\|\sum_{j=1}^n x_j v_j\big\|_1}\,. 
\]
The existence of a sequence of eigensolutions now follows from
Theo\-rem~\ref{t:exCP}. 

For the unboundedness of the critical values 
$(c_{k,\alpha})_k$ we still need the compactness of the  
sublevel sets $\{\cF\le\gamma\}$. In the case of (E5') there is some
$\ti\gamma\in\R$ with $\cEP(v)\ge\ti\gamma$ for all $v\in L^p(\Omega)$.  
Hence
\[
  0\le \cE_{TV}(v) = \cF(v)-\cEP(v)\le \gamma-\ti\gamma \quad\text{for all }
  v\in \{\cF\le\gamma\}\,.
\]
Since $\cE_{TV}$ is an equivalent norm on $BV(\Omega)$, the set
$\{\cF\le\gamma\}$ is bounded in $BV(\Omega)$ and, by the
compact embedding $BV(\Omega)\hookrightarrow L^p(\Omega)$,
it is compact in $L^p(\Omega)$. For the second case (E5'') we argue
analogously using \eqref{e:proofs5} and \eqref{e:alph<<1}.

For the assertion concerning the Euler-Lagrange equation we can
obviously apply Theo\-rem~\ref{t:ELG}.
It remains to show the boundedness statement for the rescaled family 
$v_{k,\alpha}=\frac{u_{k,\alpha}}{\|u_{k,\alpha}\|_1}$.
This part of the proof is postponed to the end of this section.
\end{proof}

\medskip

\begin{proof}[Proof of Theorem \ref{t:EperBifurc}]
Using \eqref{e:Eper_le_||p}, \eqref{e:hatck}, \eqref{e:ppbound}, 
$\cS_k^\alpha$ according to \eqref{e:cSkalpha}, and 
\[
\hat c_{k,\alpha}:=\frac{c_{k,\alpha}}{\alpha}\,,
\]
we have 
\begin{align*}
\limsup_{\alpha\to 0} \,\hat c_{k,\alpha}
&= \limsup_{\alpha\to0} \inf_{S\in\cS^\alpha_k}\sup_{u\in S}
\frac{1}{\alpha}\big(\cE_{TV}(u) + \cE_{\Per}(u)\big)\\ 
&= \limsup_{\alpha\to0}\inf_{S\in\cS^1_k}\sup_{u\in S} \big(\cE_{TV}(u) +
\tfrac{1}{\alpha}\cE_{\Per}(\alpha u)\big)\displaybreak[2]\\ 
 &\le  \limsup_{\alpha\to0}\inf_{S\in\cS^1_k}\sup_{u\in S} \big(\cE_{TV}(u) +
 \tfrac{1}{\alpha}C_{\Per}\|\alpha u\|^p_p\big)\displaybreak[2]\\ 
 &=  \limsup_{\alpha\to0}\inf_{S\in\cS^1_k}\sup_{u\in S} \big(\cE_{TV}(u) +
 \alpha^{p-1}C_{\Per}\|u\|^p_p\big)\displaybreak[2]\\ 
 &\le \limsup_{\alpha\to0}\inf_{S\in\cS^1_k}\sup_{u\in S}\:\cE_{TV}(u)
 \big(1\!+\alpha^{p-1}C_{\textup{BV}}^{(p-1)n}C_{\Per}\big)\! +
 \alpha^{p-1}C_{\textup{BV}}^{(p-1)n}C_{\Per}\displaybreak[2]\\ 
 &=\inf_{S\in\cS^1_k}\sup_{u\in S} \cE_{TV}(u) = \lambda_{k,1}\,.
\end{align*}
Similarly, we obtain the reverse inequality by
\begin{align*}
\liminf_{\alpha\to 0} \hat c_{k,\alpha} &=
\liminf_{\alpha\to0}\inf_{S\in\cS^1_k}\sup_{u\in S} \big(\cE_{TV}(u) +
\tfrac{1}{\alpha}\cE_{\Per}(\alpha u)\big)\displaybreak[2]\\ 
&\ge  \limsup_{\alpha\to0}\inf_{S\in\cS^1_k}\sup_{u\in S} \big(\cE_{TV}(u) -
\tfrac{1}{\alpha}C_{\Per}\|\alpha u\|^p_p\big)\displaybreak[2]\\ 
&\ge \limsup_{\alpha\to0}\inf_{S\in\cS^1_k}\sup_{u\in S}\:\cE_{TV}(u) 
\big(1\! -\alpha^{p-1}C_{\textup{BV}}^{(p-1)n}C_{\Per}\big)\! -
\alpha^{p-1}C_{\textup{BV}}^{(p-1)n}C_{\Per}\displaybreak[2]\\ 
&=\inf_{S\in\cS^1_k}\sup_{u\in S} \:\cE_{TV}(u) = \lambda_{k,1}\,.
\end{align*}
Hence
\begin{align}   \label{e:proofs16}
 \lim_{\alpha\to 0} \frac{c_{k,\alpha}}{\alpha} = \
 \lim_{\alpha\to 0} \hat c_{k,\alpha} = \lambda_{k,1}\,,
\end{align}
i.e. the fist assertion is verified. 
The other limit will follow from the next proposition. 

\medskip

\begin{prop}\label{p:20}
Let $\Omega\subseteq \R^n$ be open and bounded with Lipschitz boundary,
let $\alpha>0$, let $1<p\le 1+\frac{1}{n}$,
and let $\cEP$ satisfy \textup{(E1)-(E4)}. 
Moreover, $(u_\alpha)_{\alpha>0}$ be a family of
critical points of \eqref{e:E_per1Lap-a},\,\eqref{e:cG_Eper-a}
with corresponding eigenvalues $(\lambda_\alpha)_\alpha$ such that the 
\[
\hat c_\alpha:=
\frac{1}{\alpha}\big(\cE_{TV}(u_\alpha)+\cE_{\Per}(u_\alpha)\big)
\]
are bounded for $0<\alpha\le \alpha_0$ for some $\alpha_0>0$.  
Then the rescaled critical points   
\[
v_\alpha:=\frac{u_\alpha}{\alpha} 
\]
are bounded in $BV(\Omega)$ for $\alpha\in (0,\alpha_1]$ with some 
$\alpha_1\in(0,\alpha_0]$. Moreover, 

\[
 \text{$(\hat c_\alpha)_\alpha$ converges as $\alpha\to 0$ \; if and only if \;   
 $(\lambda_\alpha)_\alpha$ converges as $\alpha\to 0$\,}
\]
and, in that case, we have
\[ \lim_{\alpha\to 0}\lambda_\alpha=\lim_{\alpha\to 0} \hat c_\alpha\,. \]
\end{prop}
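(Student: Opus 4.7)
\emph{Plan.} The proof should exploit the $1$-homogeneity of $\cE_{TV}$ together with the constraint $\cG_1(u_\alpha)=\alpha$, which forces $\|u_\alpha\|_1=\alpha$ and hence $\|v_\alpha\|_1=1$ and $\cE_{TV}(v_\alpha)=\tfrac{1}{\alpha}\cE_{TV}(u_\alpha)$. The whole argument then reduces to showing that, after rescaling, the perturbation contribution $\tfrac{1}{\alpha}\cE_{\Per}(u_\alpha)$ (and analogously $\tfrac{1}{\alpha}\<u^*_\alpha,u_\alpha\>$) vanishes as $\alpha\to 0$ at a rate strictly faster than what $\cE_{TV}(v_\alpha)$ can contribute.

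\emph{Boundedness of $v_\alpha$ in $BV(\Omega)$.} From
\[
\hat c_\alpha=\cE_{TV}(v_\alpha)+\tfrac{1}{\alpha}\cE_{\Per}(u_\alpha),
\]
I would bound $|\tfrac{1}{\alpha}\cE_{\Per}(u_\alpha)|$ using (E3) and the first (multiplicative) estimate of Corollary~\ref{cor:||pp}. Substituting $\cE_{TV}(u_\alpha)=\alpha\,\cE_{TV}(v_\alpha)$ and $\|u_\alpha\|_1=\alpha$ gives the key cancellation of exponents
\[
\|u_\alpha\|_p^p \le C_{\textup{BV}}^{(p-1)n}\,\alpha^{n-(n-1)p}\,\bigl(\alpha\,\cE_{TV}(v_\alpha)\bigr)^{(p-1)n}=C_{\textup{BV}}^{(p-1)n}\,\alpha^{p}\,\cE_{TV}(v_\alpha)^{(p-1)n}.
\]
Since $(p-1)n\le 1$ by hypothesis, I estimate $t^{(p-1)n}\le t+1$; dividing by $\alpha$ produces a bound $C\,\alpha^{p-1}(\cE_{TV}(v_\alpha)+1)$ with $p-1>0$. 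Inserting into the formula for $\hat c_\alpha$ and absorbing $\cE_{TV}(v_\alpha)$ for $\alpha$ small enough (so that $C\alpha^{p-1}<\tfrac{1}{2}$) yields $\cE_{TV}(v_\alpha)\le 2\hat c_\alpha + 1$. Because $\cE_{TV}$ is an equivalent norm on $BV(\Omega)$ and $\|v_\alpha\|_1=1$, this gives the claimed $BV$-boundedness on some interval $(0,\alpha_1]$.

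\emph{Equivalence of the two limits.} Applying \eqref{e:results5} and rescaling,
\[
\lambda_\alpha=\cE_{TV}(v_\alpha)+\tfrac{1}{\alpha}\<u^*_\alpha,u_\alpha\>,\qquad \hat c_\alpha=\cE_{TV}(v_\alpha)+\tfrac{1}{\alpha}\cE_{\Per}(u_\alpha),
\]
so
\[
\lambda_\alpha-\hat c_\alpha=\tfrac{1}{\alpha}\bigl(\<u^*_\alpha,u_\alpha\>-\cE_{\Per}(u_\alpha)\bigr).
\]
The estimate of $\|u_\alpha\|_p^p$ from above together with (E3) gives $\tfrac{1}{\alpha}|\cE_{\Per}(u_\alpha)|\le C\,\alpha^{p-1}\,\cE_{TV}(v_\alpha)^{(p-1)n}$; the bound (E4) combined with Hölder yields $|\<u^*_\alpha,u_\alpha\>|\le p\,C_{\Per}\|u_\alpha\|_p^p$ and hence the same type of estimate. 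Both terms tend to zero since $\alpha^{p-1}\to 0$ and $\cE_{TV}(v_\alpha)$ stays bounded by the first part. Therefore $\lambda_\alpha-\hat c_\alpha\to 0$, which gives the equivalence of convergence and equality of the limits.

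\emph{Main obstacle.} The delicate point is the choice of interpolation estimate. The additive version \eqref{e:ppbound} of Corollary~\ref{cor:||pp} would, after division by $\alpha$, produce a negative power of $\alpha$ and be useless. The multiplicative form coupled with the homogeneity substitution $\cE_{TV}(u_\alpha)=\alpha\,\cE_{TV}(v_\alpha)$ is what reveals the crucial $\alpha^p$ factor that, after normalization by $\alpha$, leaves the favorable remainder $\alpha^{p-1}\to 0$. The strict inequality $p>1$ is therefore indispensable.
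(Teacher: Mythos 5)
Your proof is correct and follows essentially the same route as the paper: both hinge on the homogeneity substitution $\cE_{TV}(u_\alpha)=\alpha\,\cE_{TV}(v_\alpha)$, $\|v_\alpha\|_1=1$, combined with the interpolation bound of Corollary~\ref{cor:||pp} and the sublinearization $t^{(p-1)n}\le t+1$ (which is exactly how the paper's estimate \eqref{e:ppbound}, applied to $v_\alpha$, is obtained), yielding the decisive factor $\alpha^{p-1}$ that absorbs the perturbation terms. Your direct verification that $\lambda_\alpha-\hat c_\alpha\to 0$ is a slightly tidier packaging of the paper's two one-sided $\limsup$/$\liminf$ chains, but the underlying estimates are identical.
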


\begin{proof} 
We use \eqref{e:Eper_le_||p}, \eqref{e:ppbound}, and 
$\|u_\alpha\|_1=\alpha$ to estimate 
\begin{align*}
\hat c_\alpha &= \tfrac{1}{\alpha}\big(\cE_{TV}(u_\alpha) +
\cE_{\Per}(u_\alpha)\big)\\ 
 & \ge \cE_{TV}(v_\alpha) - \tfrac{1}{\alpha}C_{\Per}\|\alpha v_\alpha\|_p^p\\
& = \cE_{TV}(v_\alpha) - \alpha^{p-1}C_{\Per}\|v_\alpha\|_p^p\displaybreak[2]\\
&\ge \cE_{TV}(v_\alpha)\! - \alpha^{p-1}C_{\Per} \big(
C_{\textup{BV}}^{(p-1)n}\,\|v_\alpha\|_1^{n-(n-1)p}\,\cE_{TV}(v_\alpha)\! +
C_{\textup{BV}}^{(p-1)n}\,\|v_\alpha\|_1^{n-(n-1)p}\big)\displaybreak[2]\\ 
&= \big(1- \alpha^{p-1}C_{\Per} \,C_{\textup{BV}}^{(p-1)n} \big)
\cE_{TV}(v_\alpha) - \alpha^{p-1}C_{\Per}\,  C_{\textup{BV}}^{(p-1)n}\,. 
\end{align*}
For $\alpha\in(0,\alpha_0]$ sufficiently small, say $\alpha\le\alpha_1$, 
we obtain 
\begin{align}
\cE_{TV}(v_\alpha)\le \frac{\hat c_\alpha + \alpha^{p-1}C_{\Per}\,
  C_{\textup{BV}}^{(p-1)n}}{1-
  \alpha^{p-1}C_{\Per}\,C_{\textup{BV}}^{(p-1)n}}
\le \frac{\hat c_\alpha + \alpha_1^{p-1}C_{\Per}\,
  C_{\textup{BV}}^{(p-1)n}}{1-
  \alpha_1^{p-1}C_{\Per}\,C_{\textup{BV}}^{(p-1)n}}\,.
\label{e:cE(va)}
\end{align}
Hence $\cE_{TV}(v_\alpha)$ is bounded and, since $\cE_{TV}$ is an equivalent
norm on $BV(\Omega)$, the first assertion follows.

Analogously to \eqref{e:cE(va)} we get
\begin{align}
\cE_{TV}(v_\alpha)\ge \frac{\hat c_\alpha - \alpha^{p-1}C_{\Per}
  C_{\textup{BV}}^{(p-1)n}}{1+ \alpha^{p-1}C_{\Per}\,C_{\textup{BV}}^{(p-1)n}
}\,. 
\label{e:cE(va)2} 
\end{align}
By \eqref{e:results5} there is some $u_\alpha^*\in \bd \cE_{\Per}(u_\alpha)$ 
with
\[
\lambda_\alpha =\frac{\cE_{TV}(u_\alpha) +
  \<u_\alpha^*,u_\alpha\>_{L^{p'}\!,L^p}}{\alpha} \,.
\]
Using \eqref{e:u*leu_p^p}, \eqref{e:ppbound}, and \eqref{e:cE(va)} we can
derive that 
\begin{align*}
\lambda_\alpha 
&=\cE_{TV}(\tfrac{u_\alpha}{\alpha}) +
\alpha^{-1}\<u_\alpha^*,u_\alpha\>_{L^{p'}\!,L^p}\\ 
&\le \cE_{TV}(\tfrac{u_\alpha}{\alpha}) + \alpha^{-1}p\,C_{\Per}\|u_\alpha\|_p^p\\
&=  \cE_{TV}(v_\alpha) + \alpha^{p-1}p\,C_{\Per}\|v_\alpha\|_p^p\\
&\le  \Big(1 +
\alpha^{p-1}p\,C_{\Per}\,C_{\textup{BV}}^{(p-1)n}\Big)\cE_{TV}(v_\alpha) +
\alpha^{p-1}p\,C_{\Per}\,C_{\textup{BV}}^{(p-1)n}\\ 
&\le  \Big(1 + \alpha^{p-1}p\,C_{\Per} \; C_{\textup{BV}}^{(p-1)n}\Big)
\frac{\hat c_\alpha + \alpha^{p-1}C_{\Per}\,C_{\textup{BV}}^{(p-1)n}}{1-
  \alpha^{p-1}C_{\Per}\,C_{\textup{BV}}^{(p-1)n}} +
\alpha^{p-1}p\,C_{\Per}\,C_{\textup{BV}}^{(p-1)n}\,. \\
\end{align*}
Consequently,
\begin{align}
 \limsup_{\alpha\to0}\lambda_\alpha\le \liminf_{\alpha\to 0} \hat
 c_\alpha\,.\label{e:ls<li} 
\end{align}
With \eqref{e:cE(va)2} we similarly obtain the opposite direction
\begin{align*}
\lambda_\alpha &= \cE_{TV}(\tfrac{u_\alpha}{\alpha}) +
\alpha^{-1}\<u_\alpha^*,u_\alpha\>_{L^{p'}\!,L^p}\\ 
&\ge \cE_{TV}(\tfrac{u_\alpha}{\alpha}) - \alpha^{-1}p\,C_{\Per}\|u_\alpha\|_p^p\\
&\ge  \Big(1 - \alpha^{p-1}p\,C_{\Per} \; C_{\textup{BV}}^{(p-1)n}\Big) 
\frac{\hat c_\alpha - \alpha^{p-1}C_{\Per} C_{\textup{BV}}^{(p-1)n}}{1+
  \alpha^{p-1}C_{\Per} C_{\textup{BV}}^{(p-1)n}} - \alpha^{p-1}p\,C_{\Per}
C_{\textup{BV}}^{(p-1)n}\\ 
\end{align*}
and, thus,
\begin{align}
 \limsup_{\alpha\to 0} \hat c_\alpha \le \liminf_{\alpha\to 0}\lambda_\alpha
 \,.\label{e:li>ls} 
\end{align}
Now the assertion follows from \eqref{e:ls<li} and \eqref{e:li>ls}.
\end{proof}

\medskip

We continue with the proof of Theorem \ref{t:EperBifurc} by applying
Proposition \ref{p:20} to $(u_{k,\alpha})_\alpha$ and 
$(\lambda_{k,\alpha})_\alpha$. Using \eqref{e:proofs16} we conclude that
\[ 
\lim_{\alpha\to 0}\lambda_{k,\alpha}=\lim_{\alpha\to 0} \hat
c_{k,\alpha}=\lambda_{k,1}
\]
which completes the proof.
\end{proof}

\medskip

\begin{proof}[Proof of Theorem \ref{t:exist:CP1perLap}]
We still have to show that, for fixed $k\in\N$, there is some $\alpha_1>0$
such that the family $v_{k,\alpha}=\frac{u_{k,\alpha}}{\|u_{k,\alpha}\|_1}$
is bounded in $BV(\Omega)$ for $\alpha\in(0,\alpha_1]$. 
But, by \eqref{e:proofs16},
this is a direct consequence of Proposition \ref{p:20} applied to
$(u_{k,\alpha})_\alpha$ and $(\lambda_{k,\alpha})_\alpha$ .
\end{proof}

\subsection{Proofs for perturbations of the constraint}
\label{ss:proofPertconstr}

\begin{proof}[Proof of Theorem~\ref{t:ELGGper}]
It is not difficult to see that we can apply
\cite[Cor.~3.7]{degiovanni-s:98} with 
\[ f_0=\cE_{TV}\,,\; f_1=0\,,\; g_0=-1\,,\; g_1=\cG_1 + \cG_{\Per} -\beta\,.\]
As in the proof of Theorem \ref{t:ELG}, 
the (epi)-con\-di\-tion follows from \cite[Thm.~3.4]{degiovanni-s:98}) with
$u_{-}=0$ and $u_{+}=2u$ by the preceding lemma. 
\end{proof}

\medskip

Let us now prepare the proof of Theorem~\ref{t:CP_Gpert}. 

\begin{lem}\label{l:propG1Per}
Let $\cGP$ be locally Lipschitz continuous such that \textup{(G5)} is satisfied
and let $u\in L^p(\Omega)\setminus\{0\}$. Then the function
\begin{align*}
t\mapsto \cG_1(tu) +\cG_{\Per}(tu)
\end{align*}
is strictly increasing on $[0,\infty)$ and we have 
\begin{align} \label{e:proofs6}
(\cG_1+\cG_{\Per})^0(u;\, -u)<0\,.
\end{align}
\end{lem}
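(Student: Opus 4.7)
The plan is to exploit the positive homogeneity of $\cG_1$ together with the pointwise consequence of (G5). First I would observe that $\cG_1(tu)=t\|u\|_1$ for $t\ge 0$, so the function in question simplifies to $h(t):=t\|u\|_1+\cGP(tu)$, which is locally Lipschitz on $[0,\infty)$. The case $t_1=0$ is handled immediately by (G3): $h(0)=0$ while $h(t)\ge t\|u\|_1>0$ for $t>0$, because $\cGP\ge 0$ and $u\ne 0$.

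For $0<t_1<t_2$ I would invoke Lebourg's mean value theorem (\cite[Thm.~2.3.7]{clarke:87}) applied to $h$ to produce $\theta\in(t_1,t_2)$ and $c\in\partial h(\theta)$ with $h(t_2)-h(t_1)=c(t_2-t_1)$, and then show $c>0$. By Clarke's chain rule for $h=(\cG_1+\cGP)\circ(t\mapsto tu)$ combined with the subdifferential sum rule (both summands are locally Lipschitz), every such $c$ admits a representation $c=\<s,u\>+\<w^*,u\>$ with $s\in\partial\cG_1(\theta u)$ and $w^*\in\partial\cGP(\theta u)$. Since $\theta>0$, one has $s(x)\in\Sgn(\theta u(x))=\Sgn(u(x))$ a.e., hence $s(x)u(x)=|u(x)|$ a.e.\ and $\<s,u\>=\|u\|_1$.

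The crucial estimate, which is the only place where (G5) enters, is $\<w^*,u\>>-\|u\|_1$. Applying \eqref{e:u^*u>-|u|2} to $\theta u$ in place of $v$ gives $w^*(x)u(x)>-|u(x)|$ a.e.\ on $\{u\ne 0\}$, a set of positive measure since $u\ne 0$ in $L^p(\Omega)$, while both sides vanish on $\{u=0\}$. Integrating yields the strict inequality, so $c=\|u\|_1+\<w^*,u\>>0$ and therefore $h(t_2)>h(t_1)$, proving strict monotonicity.

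Inequality \eqref{e:proofs6} will follow from the same decomposition applied at $u$ itself: any $v^*\in\partial(\cG_1+\cGP)(u)$ splits as $v^*=s+w^*$, yielding $\<v^*,-u\>=-\|u\|_1-\<w^*,u\><0$. Since the Clarke subdifferential is weak-$*$ compact, the maximum defining $(\cG_1+\cGP)^0(u;-u)$ is attained, and is therefore strictly negative. The only real point of care is the careful treatment of the null set $\{u=0\}$ to extract a \emph{strict} integral inequality from the pointwise one in (G5); the rest is standard nonsmooth calculus.
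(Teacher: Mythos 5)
Your proof follows essentially the same route as the paper's: Lebourg's mean value theorem combined with the sum rule for Clarke gradients reduces strict monotonicity to the pointwise estimate \eqref{e:u^*u>-|u|2} on the positive-measure set $\{u\neq 0\}$, and the max formula for $(\cG_1+\cG_{\Per})^0(u;-u)$ together with weak-$*$ compactness of the subdifferential gives \eqref{e:proofs6}. The only blemish is your treatment of $t_1=0$ via (G3), which is not among the lemma's hypotheses (only local Lipschitz continuity and (G5) are assumed); the detour is unnecessary anyway, since for $t_1=0<t_2$ Lebourg's theorem still yields a strictly positive intermediate point $\theta\in(0,t_2)$, so your argument for $0<t_1<t_2$ applies verbatim.
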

\begin{proof} Let $\cG:=\cG_1 + \cG_{\Per}$ and let $0\le t_1<t_2$. 
By Lebourg's Theorem (cf.~\cite[Thm.~2.3.7]{clarke:87}) 
there is $\theta\in (0,1)$ and $w^*\in \partial \cG\big((\theta t_1 +
(1-\theta)t_2)u\big)$ such that 
\begin{align}
\cG(t_2u)-\cG(t_1u)= \<w^*,(t_2-t_1)u\>=(t_2-t_1)\int_{\Omega}w^*(x) u(x)\d
x\,.\label{e:cGstrMon} 
\end{align}
By the sum rule for generalized gradients
(cf.~\cite[Prop.~2.3.3]{clarke:87}) we find 
$s\in \partial\cG_1(u)$ and
$u^*\in \cG_{\Per}(u)$ with $w^*=s + u^*$ where $s(x)\in \Sgn(u(x))$ for
a.e. $x\in \Omega$ (cf.~\cite{kawohl-s:07}). 
Whence we have for almost every $x\in\Omega$ with $u(x)\neq 0$ that 
\[
w^*(x)u(x) =s(x) u(x) + u^*(x) u(x) = |u(x)| + u^*(x) u(x) > 0
\]
by \eqref{e:u^*u>-|u|2}. 
With \eqref{e:cGstrMon} we obtain the first assertion that 
$t\mapsto \cG(tu)$ is strictly increasing.

Using \cite[Prop.~2.1.2]{clarke:87} and
\eqref{e:u^*u>-|u|2} we get for the 
generalized directional derivative 
\begin{align*}
\cG^0(u;\, -u)&=\max_{w^*\in \partial \cG(u)}\<w^*,-u\>\\
&\le \max_{s\in \partial \cG_1(u)} \<s,-u\> +
\max_{u^*\in\partial\cG_{\Per}(u)} \<u^*,-u\>\\ 
&=-\int_\Omega |u(x)| \d x + \max_{u^*\in\partial \cG_{\Per}(u)}-\int_\Omega
u^*(x) u(x) \d x\\ 
&=-\|u\|_1 - \min_{u^*\in\partial \cG_{\Per}(u)}\int_\Omega u^*(x) u(x) \d x\\
&<-\|u\|_1 - \int_\Omega -|u(x)| \d x\\
&= -\|u\|_1 +\|u\|_1 =0\,
\end{align*}
and the proof is complete.
\end{proof}

\smallskip

\begin{lem}\label{l:t_u}
We assume that the assumptions of Theo\-rem~\ref{t:CP_Gpert} are satisfied 
and that $u\in L^p(\Omega)\setminus\{0\}$ is given. 
Then there exists a unique $t_u>0$ such that
 \[\cG_1(t_uu)+\cG_{\Per}(t_uu)=\beta\,.\]
The mapping $u\mapsto t_u$ is continuous and even on
$L^p(\Omega)\setminus\{0\}$. Moreover
\[
\Phi_\beta : \{u\in L^p(\Omega)\setsep \cG_1(u)+\cG_{\Per}(u)=\beta\}\to
\{u\in L^p(\Omega)\setsep \|u\|_1=1\}
\] 
given by
\[\Phi_\beta(u)=\frac{u}{\|u\|_1}    \]
is an odd homeomorphism with odd inverse $\Psi_\beta$ given by
\[
\Psi_\beta(u) =  t_u u \qquad\text{for $u\in L^p(\Omega)$ with $\|u\|_1=1$}\,.
\]
\end{lem}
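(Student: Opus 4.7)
The plan is to build everything on the strict monotonicity of $t\mapsto \cG_1(tu)+\cGP(tu)$ that has already been established in Lemma~\ref{l:propG1Per}, together with the boundary values of this function at $0$ and $\infty$.

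First I would fix $u\in L^p(\Omega)\setminus\{0\}$ and consider $\varphi_u(t):=\cG_1(tu)+\cGP(tu)$ on $[0,\infty)$. Since $\cG_1(0)=0$ and (G3) gives $0\le\cGP(0)\le C_{\Per}\|0\|_p^p=0$, we have $\varphi_u(0)=0$. Moreover $\varphi_u(t)\ge \cG_1(tu)=t\|u\|_1\to\infty$ as $t\to\infty$ by (G3), and $\varphi_u$ is continuous by local Lipschitz continuity of $\cGP$ (G1). Lemma~\ref{l:propG1Per} shows $\varphi_u$ is strictly increasing, so the intermediate value theorem delivers a unique $t_u>0$ with $\varphi_u(t_u)=\beta$. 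Evenness of $u\mapsto t_u$ is then immediate from $\cG_1(-v)=\cG_1(v)$ and (G2), combined with the uniqueness just proved.

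Next I would establish continuity of $u\mapsto t_u$ by a subsequence argument. Given $u_n\to u$ in $L^p(\Omega)$ with $u\neq 0$, I would first rule out $t_{u_n}\to\infty$ along a subsequence (this would force $\varphi_{u_n}(t_{u_n})\ge t_{u_n}\|u_n\|_1\to\infty$, contradicting $\varphi_{u_n}(t_{u_n})=\beta$) and rule out $t_{u_n}\to 0$ along a subsequence (this would give $t_{u_n}u_n\to 0$ in $L^p$ and, by continuity of $\cG_1+\cGP$, $\beta=0$). Thus $(t_{u_n})$ is contained in some compact interval of $(0,\infty)$, and any cluster point $t^\ast$ satisfies $\varphi_u(t^\ast)=\beta$ by continuity of $\cG_1+\cGP$; uniqueness then forces $t^\ast=t_u$, so the whole sequence converges.

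Finally I would check the homeomorphism claim. Note $u\in A_\beta:=\{\cG_1+\cGP=\beta\}$ implies $u\neq 0$ (else $\beta=0$), so $\Phi_\beta(u)=u/\|u\|_1$ is well defined and continuous. For the stated inverse $\Psi_\beta(v)=t_v v$ on $B:=\{\|v\|_1=1\}$, I would verify $\Psi_\beta\circ\Phi_\beta=\mathrm{id}_{A_\beta}$ by observing that for $u\in A_\beta$ and $v=u/\|u\|_1$ the scalar $\|u\|_1$ satisfies $\varphi_v(\|u\|_1)=\cG_1(u)+\cGP(u)=\beta$, so uniqueness gives $t_v=\|u\|_1$, hence $\Psi_\beta(\Phi_\beta(u))=u$; conversely $\Phi_\beta(\Psi_\beta(v))=t_v v/(t_v\|v\|_1)=v$. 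Continuity of $\Psi_\beta$ follows from the continuity of $u\mapsto t_u$ just proved, and oddness of both maps is immediate from the evenness of $t_u$ and the homogeneity of the $L^1$ norm. The only mildly delicate step is the continuity of $u\mapsto t_u$, since it requires the two-sided a priori bound; everything else reduces to the monotonicity lemma and elementary bookkeeping.
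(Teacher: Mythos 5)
Your proposal is correct and follows essentially the same route as the paper: existence and uniqueness of $t_u$ via the strict monotonicity from Lemma~\ref{l:propG1Per} together with the boundary values $\varphi_u(0)=0$ and $\varphi_u(t)\ge t\|u\|_1\to\infty$, continuity of $u\mapsto t_u$ by a compactness/subsequence argument using $\cGP\ge 0$, and the homeomorphism claim reduced to these facts. The only cosmetic difference is that you rule out the cluster point $0$ explicitly, whereas the paper disposes of it through uniqueness of the solution of $\varphi_u(t)=\beta$ on $[0,\infty)$; both are fine.
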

\begin{proof}
By Lemma \ref{l:propG1Per} the mapping 
\[ [0,\infty)\ni t\mapsto \|tu\|_1 + \cG_{\Per}(tu)\]
is strictly increasing and, obviously, it is continuous. 
From assumption \eqref{e:G_1est} we infer 
\[
\|0u\|_1 + \cG_{\Per}(0u)=0 \quad\text{and}\quad
\liminf_{t\to\infty} \|tu\|_1 + \cG_{\Per}(tu)\ge \liminf_{t\to \infty}t\|u\|_1=\infty\,.
\]
Thus $t_u$ exists by the intermediate value theorem and is uniquely determined
by strict monotonicity. Since $\cG_1$ and $\cGP$ are even, also $u\mapsto t_u$
is even.

Let now $u_j\to u\ne 0$ in $L^p(\Omega$ and, thus, also in $L^1(\Omega)$. With
$t_j:=t_{u_j}$ and $\cGP(u_j)\ge 0$ by \eqref{e:G_1est} we have 
$\beta\ge t_j\|u_j\|_1$. Therefore $(t_j)_j$ must be bounded and,
at least for a subsequence (denoted the same way), we get $t_j\to: t_0\ge 0$.  
By continuity  
\[
\beta=\lim_{j\to\infty} \cG_1(t_j u_j)+\cG_{\Per}(t_j u_j) = \|t_0u\|_1 +
\cG_{\Per}(t_0u)\,. 
\]
Uniqueness of $t_u$ implies $t_0=t_u$ and, thus, continuity
of $u\mapsto t_u$. The properties of $\Phi_\beta$ and $\Psi_\beta$ are 
a simple consequence of the properties of $t_u$.
\end{proof}

\medskip

\begin{proof}[Proof of Theorem~\ref{t:CP_Gpert}]
We will apply Theo\-rem~\ref{t:exCP} to 
$\cF=\cE_{TV} + I_{\{\cG_1 + \cG_{\Per}=\beta\}}$. 
Properties (F1) and (F2) are immediate. 
Since $\cE_{TV}$ is an equivalent norm on $BV(\Omega)$, the sublevel sets
$\{\cF\le c\}$ are obviously bounded in $BV(\Omega)$ 
and, by the compact embedding $BV(\Omega)\hookrightarrow L^p(\Omega)$,
they are compact in $L^p(\Omega)$.
Clearly, any (PS)-sequence for the level
$c\in \R$ is eventually contained in $\{\cF\le c+ 1\}$ and, therefore,
compactness of all sublevel sets implies the (PS)-con\-di\-tion. 
The (epi)-con\-di\-tion follows from \eqref{e:proofs6} 
and \cite[Thm.~3.4]{degiovanni-s:98} applied with 
\[ g_0=-1\,,\; g_1=\cG_1+\cG_{\Per}-\beta\,,\;
u_{-}=0\,,\; u_{+}=2u\,,\;  C=BV(\Omega)        \,.
\] 
Using $\Phi:\S^{k-1}\to\{\|\cdot\|_1=1\}\subseteq L^p(\Omega)$
from the proof of Theo\-rem~\ref{t:exist:CP1perLap} with $\alpha=1$, the
mapping 
$\Psi_\beta\circ\Phi:\S^{k-1}\to \{\cG_1+\cGP=\beta\}\subseteq L^p(\Omega)$
verifies assumption (F5). 
Now Theo\-rem~\ref{t:exCP} implies the stated
existence of a sequence of eigensolutions of 
\eqref{e:VP_Gpert1},\,\eqref{e:VP_Gpert2} and the unboundedness of 
the sequence of critical values $(c_{k,\beta})_k$. Clearly we can apply
Theorem \ref{t:ELGGper} for the assertion concerning the Euler-Lagrange
equation. 

It remains to show that, for fixed $k\in\N$, the rescaled family 
$v_{k,\beta}=\frac{u_{k,\beta}}{\|u_{k,\beta}\|_1}$
is bounded in $BV(\Omega)$ for $\beta>0$ bounded. We postpone this part of the
proof to the end of this section.  
\end{proof}

\medskip

\begin{proof}[Proof of Theorem~\ref{t:bifb}]
With condition \eqref{e:G_1est} and Corollary~\ref{cor:||pp}, 
we have for any $u\in
L^p(\Omega)\cap BV(\Omega)$ with $\cG_1(u)+\cG_{\Per}(u)=\beta$ that
\begin{align}
\|u\|_1\le \beta &= \|u\|_1 + \cG_{\Per}(u)\label{e:proofs11a}\\
&\le \|u\|_1 + C_{\Per}\|u\|_p^p \label{e:proofs11}\\
&\le \|u\|_1 + C_{\Per}C_{\textup{BV}}^{(p-1)n}\,\|u\|_1^{n-(n-1)p}\cE_{TV}(u)^{(p-1)n}
\notag\displaybreak[2]\\  
&= \|u\|_1\Big(1 + C_{\Per}C_{\textup{BV}}^{(p-1)n} \|u\|_1^{p-1} 
\cE_{TV}\Big(\frac{u}{\|u\|_1}\Big)^{(p-1)n}\,\Big) \label{e:proofs8}
\displaybreak[2]\\
&\le \|u\|_1\Big(1 + C_{\Per}C_{\textup{BV}}^{(p-1)n} \beta^{p-1} 
\cE_{TV}\Big(\frac{u}{\|u\|_1}\Big)^{(p-1)n}\,\Big)\,.\label{e:proofs7}
\end{align}

Consider $\Phi_\beta$ from Lemma \ref{l:t_u} and 
$\cS^1_k$ according to \eqref{e:cSkalpha} with $\alpha=1$. 
Since the genus remains unchanged under homeomorphisms, we have 
\begin{align}
S\in \cS_k^\beta\qquad\text{if and only if}\qquad
\Phi_\beta(S)\in\cS^1_k\,.\label{e:proofs7A}
\end{align}
Therefore
\begin{align}
\hat c_{k,\beta} := \frac{c_{k,\beta}}{\beta}
&= \inf_{S\in\cS_k^\beta}\sup_{u\in S}\,\frac{1}{\beta}\,\cE_{TV}(u)\notag\\
&=\inf_{S\in\cS_k^\beta}\sup_{u\in
  S}\,\frac{\|u\|_1}{\beta}\,\cE_{TV}\Big(\frac{u}{\|u\|_1}\Big)
  \notag\displaybreak[2]\\  
&\le \inf_{S\in\cS_k^\beta}\sup_{u\in
  S}\,\frac{\beta}{\beta}\,\cE_{TV}\Big(\frac{u}{\|u\|_1}\Big)\notag\\ 
&=\inf_{S\in\cS^1_k}\sup_{v\in S}\,\cE_{TV}(v) = \lambda_{k,1}\,.\label{e:Ckb_le_lk} 
 \end{align}

For some reverse inequality we choose   
$S_\beta\in \cS_k^\beta$ with
\[
\hat c_{k,\beta}=\frac{c_{k,\beta}}{\beta}=\sup_{u\in
  S_\beta}\frac{\cE_{TV}(u)}{\beta}\,
\]
for any $\beta>0$ according to Theorem~\ref{t:exCP}.  
By \eqref{e:Ckb_le_lk} and \eqref{e:proofs8} we have for $u\in S_\beta$
\begin{align}
 \lambda_{k,1}\ge \hat c_{k,\beta} &= \sup_{u\in
   S_\beta}\frac{\cE_{TV}(u)}{\beta} \notag\\
&\ge \sup_{u\in S_\beta} \frac{\cE_{TV}(u)}{\|u\|_1\big(1 +
  C_{\Per}\,C_{BV}^{(p-1)n}\|u\|_1^{p-1}\cE_{TV}\big(\frac{u}{\|u\|_1}\big)^{(p-1)n}
  \big)}\:. \label{e:proofs9}
\end{align}
Consequently,
\[\lambda_{k,1}\left(1 + C_{\Per}C_{BV}^{(p-1)n}
  \beta^{p-1}\cE_{TV}\Big(\frac{u}{\|u\|_1}\Big)^{(p-1)n}\right)\ge
\cE_{TV}\Big(\frac{u}{\|u\|_1}\Big) \quad\text{for }
u\in \bigcup_{\beta>0} S_\beta\,.
\] 

The above inequality is of linear growth in $\cE\big(\frac{u}{\|u\|_1}\big)$
on the right had side and, by $p<1+\frac{1}{n}$,   
of sublinear growth on the left hand side. Thus, for any $\beta_0>0$ there is
some $\ti C>0$ such that 
\begin{align}
 \cE_{TV}\Big(\frac{u}{\|u\|_1}\Big)\le\tilde C \quad\text{for }
  u\in S_\beta\,, \; 0<\beta\le\beta_0\,.   \label{e:Ctilde}
\end{align}
By \eqref{e:proofs7}, \eqref{e:proofs7A}, and the definition of 
$\lambda_{k,1}$ in \eqref{e:la1k1Lap},
we now find some $C>0$ with
\begin{align*}
 \hat c_{k,\beta}= \sup_{u\in S_\beta}\frac{\cE_{TV}(u)}{\beta} 
&\ge 
\frac{1}{1+ C\beta^{p-1}} 
\sup_{u\in S_\beta}\: \cE_{TV}\Big(\frac{u}{\|u\|_1}\Big) \\ 
 &\ge \frac{\lambda_{k,1}}{1+ C\beta^{p-1}} \quad\text{for } \beta\le\beta_0\,.
\end{align*}
Using \eqref{e:Ckb_le_lk} we readily derive the first assertion that
\begin{align}\label{e:proofs12}
 \lim_{\beta\to 0}\frac{c_{k,\beta}}{\beta} =
 \lim_{\beta\to 0} \hat c_{k,\beta} = \lambda_{k,1}\,.
\end{align}

For the other limit in \eqref{e:ckb/b=lb=l}
we first recall that 
$c_{k,\beta}=\cE_{TV}(u_{k,\beta})$. 
Testing the Eu\-ler-La\-grange Equation \eqref{e:ELGGper} with
$u_{k,\beta}$, we obtain
\begin{align}\label{e:proofs13}
\hat c_{k,\beta}=\frac{\cE_{TV}(u_{k,\beta})}{\beta}=\lambda_{k,\beta}\,
\frac{\|u_{k,\beta}\|_1+\<u^*_{k,\beta},u_{k,\beta}\>}{\beta}\, 
\end{align}
for some $u^*_{k,\beta}\in\partial\cG_{\Per}(u_{k,\beta})$.
Thus the remaining result follows if we show that
\begin{align} \label{e:proofs14}
\lim_{\beta\to 0}\frac{\|u_{k,\beta}\|_1 + \<u^*_{k,\beta},u_{k,\beta}\>}{\beta}=1\,.
\end{align}
By the continuous 
embedding $BV(\Omega)\hookrightarrow L^p(\Omega)$ and
since $\cE_{TV}$ is an equivalent norm on $BV(\Omega)$,
there is some $C>0$ such that
\[  
  \|v\|_p\le C \cE_{TV}(v) \quad\text{for } v\in BV(\Omega)\,.
\]
Using \eqref{e:u^*le_u^p-1} we get for some possibly larger $C>0$
\[
  |\<u^*_{k,\beta},u_{k,\beta}\>|\le p\,C_{\Per}\|u_{k,\beta}\|_p^p 
  \le C\,\cE_{TV}(u_{k,\beta})^p\,.
\]
Thus, with \eqref{e:proofs11a} and \eqref{e:proofs12},
\[
\limsup_{\beta \to 0}\frac{\|u_{k,\beta}\|_1 + 
\<u^*_{k,\beta},u_{k,\beta}\>}{\beta}\le\limsup_{\beta\to 0}\frac{\beta + 
C\, c_{k,\beta}^{\,p}}{\beta}=1\,.
\]   
Since $\|u_{k,\beta}\|_1\ge \beta - C_{\Per}\|u_{k,\beta}\|_p^p$ 
by \eqref{e:proofs11}, we find some $\hat C>0$ with  
\begin{align*}
\liminf_{\beta\to 0}\frac{\|u_{k,\beta}\|_1 + \< u^*_{k,\beta},u_{k,\beta} \> }{\beta}
&\ge \liminf_{\beta\to0}\frac{\beta - \hat C c_{k,\beta}^{\,p}}{\beta} = 1\,.
\notag
\end{align*}
But this verifies \eqref{e:proofs14} and the proof is complete. 
\end{proof}

\medskip

\begin{proof}[Proof of Theorem~\ref{t:CP_Gpert}, second part]
Let $k\in\N$ and any $\beta_0>0$ be fixed. For (arbitrary) critical points 
$u_{k,\beta}$ of \eqref{e:VP_Gpert1}, \eqref{e:VP_Gpert2} with critical
value $c_k$ it remains to show that
the family $v_{k,\beta}=\frac{u_{k,\beta}}{\|u_{k,\beta}\|_1}$
is bounded in $BV(\Omega)$ for $0<\beta<\beta_0$ .
By \eqref{e:Ckb_le_lk} and
\eqref{e:proofs7} we obtain 
\[
\lambda_{k,1}\ge
\frac{c_{k,\beta}}{\beta}=\frac{\cE_{TV}(u_{k,\beta})}{\beta}\ge
\frac{\cE_{TV}(u_{k,\beta})}{\|u_{k,\beta}\|_1
\Big(1 + C_{\Per}C_{\textup{BV}}^{(p-1)n} \beta^{p-1}  
\cE_{TV}\Big(\frac{u_{k,\beta}}{\|u_{k,\beta}\|_1}\Big)^{(p-1)n}\,\Big)} \:.
\]
Consequently, for $0<\beta<\beta_0$, 
\[
\lambda_{k,1}\big(1 + C_{\Per}C_{\textup{BV}}^{(p-1)n} \beta_0^{p-1} 
\cE_{TV}(v_{k,\beta})^{(p-1)n}\,\big) \ge
\cE_{TV}( v_{k,\beta})\,. 
\] 
Analogously to the arguments giving \eqref{e:Ctilde}, 
we use the sublinear and linear growth in $\cE_{TV}( v_{k,\beta})$ 
to derive a uniform bound on $\cE_{TV}( v_{k,\beta})$ for
$0<\beta<\beta_0$. 
Since $\cE_{TV}$ is an equivalent norm on $BV(\Omega)$, the assertion follows.
\end{proof}

\end{document}